\documentclass[11pt, oneside]{amsart}   	
\usepackage{geometry}  
\geometry{letterpaper} 
\usepackage{graphicx}
\usepackage{array}
\usepackage{amssymb}
\usepackage{amsmath}
\usepackage{amsthm}
\usepackage{graphicx}
\usepackage[parfill]{parskip} 
\usepackage[utf8]{inputenc}
\usepackage[english]{babel}
\usepackage{algorithm}
\usepackage{tikz}
\usepackage[noend]{algpseudocode}
\usepackage{caption}
\usepackage[colorlinks=true,linkcolor=blue]{hyperref}
\usepackage{subcaption}
\usepackage{fancyhdr}
\pagestyle{fancy}
\lhead{}
\chead{}
\rhead{}
\lfoot{}
\cfoot{\thepage}
\rfoot{}

\setlength{\footskip}{50pt}

\makeatletter
\def\BState{\State\hskip-\ALG@thistlm}
\makeatother

\theoremstyle{definition}
\newtheorem{thm}{Theorem}[section]
\theoremstyle{definition}
\newtheorem{cor}[thm]{Corollary}
\theoremstyle{definition}
\newtheorem{prop}[thm]{Proposition}
\theoremstyle{definition}
\newtheorem{dfn}[thm]{Definition}
\theoremstyle{definition}
\newtheorem{lem}[thm]{Lemma}
\theoremstyle{definition}
\newtheorem{ex}[thm]{Example}
\theoremstyle{definition}
\newtheorem{conj}[thm]{Conjecture}

\graphicspath{ {images/} }

\begin{document}

\title{Classification of Minimal Polygons with Specified Singularity Content}
\author{Daniel Cavey and Edwin Kutas}

\maketitle

\begin{abstract}
It is known that there are only finitely many mutation-equivalence classes with a given singularity content, and each of these equivalence classes contains only finitely many minimal polygons. We describe an efficient algorithm to classify these minimal polygons. To illustrate this algorithm we compute all mutation-equivalence classes of Fano polygons with basket of singularities given by (i) $\mathcal{B} = \{ m_{1} \times \frac{1}{3}(1,1), m_{2} \times \frac{1}{6} (1,1) \}$ and (ii) $\mathcal{B} = \{ m \times \frac{1}{5}(1,1) \}$.
\end{abstract}

\section{Introduction.}

Let $N$ be a lattice. A \emph{polytope} $P$ in $N_{\mathbb{R}} = N \otimes \mathbb{R}$ is a set of the form 
\[ P = \Big\{ \sum\limits_{u \in S} \lambda_{u} u : \lambda_{u} > 0 \text{ and } \sum\limits_{u \in S} \lambda_{u} = 1 \Big\}, \]
where $S \subset N_{\mathbb{R}}$ is a finite set of points. A \emph{Fano polytope} is a full-dimensional convex polytope such that the vertices $\mathcal{V}(P) \in N$ are all primitive, and that the origin lies in the strict interior of $P$. When $N$ is a rank-two lattice $P$ is known as a \emph{Fano polygon}. We always consider polytopes up to $GL(N)$-equivalence.

The span of each face $E$ of a Fano polygon $P$, by which we mean $\mathbb{R}_{\geq 0}E$, defines a cone. Alternatively this is the polyhedral cone whose primitive generating vertices are given by the endpoints of $E$. We obtain a fan in $N_{\mathbb{R}}$ corresponding to $P$, and this determines a toric del Pezzo surface $X_{P}$. Many properties of $X_{P}$ have combinatorial analogues in the Fano polygon $P$; examples include the singularities and the anticanonical degree $(-K_{X_{P}})^{2}$. Toric geometry can be further studied in \cite{ToricVarieties, IntroductiontoToricVarieites}.

The dual lattice of $N$ is $M = \text{Hom}(N,\mathbb{Z})$ with the pairing $\langle , \rangle: N \times M \rightarrow \mathbb{Z}$. The \emph{lattice length} of a line segment $E \subset N_{\mathbb{R}}$ is given by the value $\lvert E \cap N \rvert - 1$. The \emph{lattice height} of a line segment is given by the lattice distance from the origin: that is, given a primitive inner pointing normal $\omega_{E}$ of $E$ belonging to $M$, the height is given by $\lvert \langle v,n_{E} \rangle \rvert$, for any $v \in E$.

The motivation for the paper comes from an attempt to classify Fano varieites using mirror symmetry. An understanding of mirror symmetry can be gained from Coates--Corti--Galkin--Golyshev--Kasprzyk \cite{MirrorSymmetryandFanoManifolds}. Mirror symmetry suggests that classifying Fano polytopes could help to classify Fano varieties. To a Fano polytope $P$, we have an associated toric variety $X_{P}$.

More specifically we study Fano polytopes up to mutation-equivalence. A mutation is a combinatorial operation on $P \subset N_{\mathbb{R}}$ introduced by Akhtar--Coates--Galkin--Kasprzyk \cite{MinkowskiPolynomialsandMutations}, which will be described in section \ref{Section 2}. We also use a mutation invariant of Fano polygons, introduced in \cite{SingularityContent} by Akhtar--Kasprzyk, known as the singularity content. Equivalently we consider Fano varieties up to qG-deformation, see Akhtar--Coates--Corti--Heuberger--Kasprzyk--Oneto--Petracci--Prince--Tveiten and Koll\'{a}r--Shepherd-Barron \cite{MirrorSymmetryandtheClassificationofOrbifoldDelPezzoSurfaces, ThreefoldsandDeformationsofSurfaceSingularities}. 
\bigskip

\begin{dfn}[\cite{MirrorSymmetryandtheClassificationofOrbifoldDelPezzoSurfaces}]
A del Pezzo surface with cyclic quotient singularities that admits a qG-deformation to a normal toric del Pezzo surface is said to be of \emph{class TG}.
\end{dfn}

The reason we consider Fano polytopes and Fano varieties up to their respective equivalence classes lies in the following conjecture:
\bigskip

\begin{conj}\cite[Conjecture A]{MirrorSymmetryandtheClassificationofOrbifoldDelPezzoSurfaces}
There exists a bijective correspondence between the set of mutation-equivalence classes of Fano polygons and the set of qG-deformation equivalence classes of locally qG-rigid TG del Pezzo surfaces with cyclic quotient singularities.
\end{conj}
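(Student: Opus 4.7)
The plan is to set up the correspondence via the toric construction $P \mapsto X_{P}$ and then analyse its behaviour on equivalence classes in both directions. First I would define the candidate map: to each Fano polygon $P$ associate its toric del Pezzo surface $X_{P}$, which is automatically of class TG (it is its own qG-degeneration). Before worrying about well-definedness on equivalence classes, I would verify that $X_{P}$ is locally qG-rigid — the cyclic quotient singularities of a toric del Pezzo carry a qG-rigidity condition that must be checked against the combinatorics of the cones spanned by the edges of $P$, using the Kollár–Shepherd-Barron classification of qG-deformations of surface cyclic quotient singularities cited in the introduction.

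The next step is well-definedness on mutation-equivalence classes. For this I would invoke the result, essentially already in \cite{MinkowskiPolynomialsandMutations} and \cite{MirrorSymmetryandtheClassificationofOrbifoldDelPezzoSurfaces}, that a mutation $P \rightsquigarrow P'$ lifts to a qG-deformation connecting $X_{P}$ and $X_{P'}$: concretely, one constructs a one-parameter family whose general fibre is $X_{P'}$ and special fibre is $X_{P}$ by interpolating the two toric fans inside an ambient toric variety associated to the mutation data. Compatibility of the singularity content (a mutation invariant, by Akhtar–Kasprzyk) with the basket of singularities of the general fibre is the book-keeping device that keeps track of the cyclic quotient singularities across the degeneration.

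Surjectivity of the induced map on equivalence classes is the next task. Given a locally qG-rigid TG del Pezzo surface $Y$, the definition of class TG provides at least one qG-degeneration to a toric $X_{P}$. One must then argue that if $Y$ admits two different toric degenerations $X_{P_{1}}$ and $X_{P_{2}}$, the polygons $P_{1}$ and $P_{2}$ are mutation-equivalent. The natural route is via the Laurent polynomial / mirror side: each degeneration produces a Laurent polynomial mirror to $Y$, and Laurent polynomial mirrors of the same Fano surface should differ by algebraic mutations, which correspond precisely to polygon mutations.

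The hard part, and indeed the reason this is only a conjecture, is injectivity: showing that if $X_{P_{1}}$ and $X_{P_{2}}$ lie in the same qG-deformation-equivalence class then $P_{1}$ and $P_{2}$ are mutation-equivalent. One would need a reconstruction theorem recovering the mutation-equivalence class from intrinsic invariants of the surface — plausibly from the quantum period or regularized quantum period of $Y$, together with the basket of singularities. Because the present paper's contribution is an algorithmic classification of minimal polygons with specified singularity content, a realistic intermediate goal is to use the algorithm to verify the conjecture on finite slices (fixed basket), reducing the injectivity question to checking that distinct mutation-equivalence classes produced by the algorithm have distinguishable quantum periods; this is where I would expect the proof to get stuck in general, and where case-by-case verification has to replace a uniform argument.
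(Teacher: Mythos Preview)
The statement you are attempting to prove is labelled in the paper as a \emph{conjecture}, cited verbatim from \cite{MirrorSymmetryandtheClassificationofOrbifoldDelPezzoSurfaces}; the paper offers no proof and does not claim one. It quotes the conjecture as motivation, records the supporting evidence of \cite{DelPezzoSurfaceswith1/311points, MinimalityandMutationEquivalenceofPolygons}, and then sets out to produce further evidence by classifying minimal polygons with prescribed singularity content. So there is nothing in the paper to compare your argument against, and your proposal should be read as a heuristic outline of why the conjecture is plausible rather than as a proof attempt to be graded for correctness.

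That said, your outline contains a concrete error at the very first step. You propose to check that the toric surface $X_{P}$ is itself locally qG-rigid. In general it is not: whenever an edge $E$ of $P$ has lattice length at least its height, the corresponding cone carries a T-singularity, and T-singularities admit nontrivial qG-smoothings by Koll\'ar--Shepherd-Barron. The correct formulation of the map is not $P \mapsto X_{P}$ but $P \mapsto [X_{P}]$, the qG-deformation-equivalence class of $X_{P}$; the locally qG-rigid representative in that class is obtained by smoothing all T-cones, leaving only the residual R-singularities recorded in the basket $\mathcal{B}$. This is exactly why singularity content $(n,\mathcal{B})$ is the right bookkeeping invariant: $n$ counts the primitive T-cones that get smoothed and $\mathcal{B}$ records what survives.

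Beyond that, you correctly identify injectivity as the genuine obstruction, and your own final paragraph effectively concedes that what you have is a programme rather than a proof. The surjectivity sketch via mirror Laurent polynomials is also only a programme: the claim that two toric degenerations of the same $Y$ yield mutation-equivalent polygons is not established in the cited literature and is itself part of the conjectural picture.
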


Recent results from Corti--Heuberger and Kasprzyk--Nill--Prince \cite{DelPezzoSurfaceswith1/311points, MinimalityandMutationEquivalenceofPolygons} support this conjecture.
\bigskip

\begin{thm}\cite[Theorem 1.2]{MinimalityandMutationEquivalenceofPolygons}
There are precisely ten mutation-equivalence classes of Fano polygons such that the toric del Pezzo surface $X_{P}$ has only T-singularities. They are in bijective correspondence with the ten families of smooth del Pezzo surfaces.
\end{thm}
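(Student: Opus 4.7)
The plan is to use the singularity content as the main invariant for organising mutation-equivalence classes. Because T-singularities are precisely the cyclic quotient singularities of the form $\tfrac{1}{dn^{2}}(1,dnk-1)$ with $\gcd(n,k)=1$, and these are exactly the singularities with trivial residual part in the sense of \cite{SingularityContent}, the hypothesis that $X_{P}$ has only T-singularities translates into $\mathcal{B}(P)=\emptyset$. Since singularity content is a mutation invariant, the theorem reduces to classifying all mutation-equivalence classes of Fano polygons with empty basket and matching them to the ten smooth del Pezzo families.

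First I would reduce the problem to a finite enumeration. The finiteness results recalled in the introduction guarantee that there are only finitely many mutation-equivalence classes with a given singularity content, and that each class contains only finitely many minimal polygons. It therefore suffices to list all minimal Fano polygons $P$ with $\mathcal{B}(P)=\emptyset$ and then group them into mutation classes. To make this list effectively computable, I would exploit the fact that every edge cone of such a $P$ must itself be a T-cone (which fixes the shape of each edge up to a few integer parameters), together with the minimality condition that no inverse mutation is possible across any edge. Translating these constraints into inequalities on lattice heights, lengths, and widths bounds the number of vertices and the overall size of $P$.

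Next I would compute mutation invariants on each surviving candidate to separate the classes. The anticanonical degree $(-K_{X_{P}})^{2}$ is mutation invariant, and for Fano polygons with empty basket it takes only a very restricted range of values; matching these against the nine possible smooth del Pezzo degrees $1,\ldots,9$ would already leave only a handful of genuinely distinct classes to distinguish. The splitting of degree $8$ into the two families $\mathbb{P}^{1}\times\mathbb{P}^{1}$ and $\mathbb{F}_{1}$ accounts for the tenth class and would need to be detected by a finer invariant (for instance, by examining the set of edge lengths or the combinatorial type of the minimal representative).

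The hard part is the ``upper bound'' direction: showing that any two candidate minimal polygons with matching invariants are genuinely mutation-equivalent, so that the count collapses to exactly ten and not more. To verify this I would have to exhibit explicit mutation chains connecting any two candidates in the same invariant bucket, and conversely certify that the two degree-$8$ families cannot be linked by any mutation. Once this is done, the bijection with smooth del Pezzo families follows cleanly: a toric del Pezzo with only T-singularities admits a qG-smoothing to a smooth del Pezzo of the same degree, and the refined invariant at degree $8$ matches $\mathbb{P}^{1}\times\mathbb{P}^{1}$ and $\mathbb{F}_{1}$ with their respective toric models.
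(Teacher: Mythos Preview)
The paper does not prove this theorem: it is quoted verbatim as a result from \cite{MinimalityandMutationEquivalenceofPolygons} (Kasprzyk--Nill--Prince), and no argument is given here beyond the citation. So there is no ``paper's own proof'' to compare your proposal against.

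That said, your outline is broadly the strategy used in \cite{MinimalityandMutationEquivalenceofPolygons}, and it is the same philosophy the present paper adopts for other baskets in Sections~\ref{Section 4}--\ref{Section 6}: translate the hypothesis into $\mathcal{B}=\emptyset$, use the degree formula $(-K_{X_P})^2 = 12 - n$ from Proposition~\ref{2.13} to bound $n$ (hence $1\le n\le 11$, but in fact $n\le 9$ once one checks cases), enumerate minimal polygons for each admissible $n$, and then separate or merge classes by explicit mutations and period computations. One point worth sharpening in your sketch: the finiteness you invoke at the outset is not an independent input but is itself established in \cite{MinimalityandMutationEquivalenceofPolygons} as part of the same circle of ideas, so in a self-contained proof you would need the concrete bounds (via the degree formula and the minimality criteria listed in Section~\ref{Section 3}) rather than an abstract finiteness statement.
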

\bigskip

\begin{thm}\cite{DelPezzoSurfaceswith1/311points, MinimalityandMutationEquivalenceofPolygons}
There are precisely 29 qG-deformation families of del Pezzo surfaces with $m\geq 1$ singular points of type $\frac{1}{3}(1,1)$ and precisely 26 of these are of class TG. They are in bijective correspondence with 26 mutation-equivalence classes of Fano polygons with singularity content $(n,\{ m \times \frac{1}{3}(1,1) \}), m \geq 1$.
\end{thm}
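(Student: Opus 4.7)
The plan splits the statement into a surface-side count ($29$ qG-deformation families) and a polygon-side count ($26$ mutation-equivalence classes of Fano polygons), matched via the qG-degeneration/mutation correspondence. This mirrors the division of labour between \cite{DelPezzoSurfaceswith1/311points} and \cite{MinimalityandMutationEquivalenceofPolygons}.

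On the polygon side, I would first bound the parameters $n$ and $m$ in any realisable singularity content $(n,\{m\times\frac{1}{3}(1,1)\})$. Since singularity content is a mutation invariant \cite{SingularityContent} that controls the anticanonical degree, the positivity condition $(-K_{X_P})^2 > 0$ together with $m\ge 1$ cuts the candidate pairs $(n,m)$ down to a finite list. For each such pair, apply the classification algorithm developed later in this paper to enumerate all minimal polygons with the specified singularity content, then group them into mutation-equivalence classes; a direct count yields $26$.

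For the surface side, I would appeal to the geometric classification of \cite{DelPezzoSurfaceswith1/311points}, which constructs all $29$ qG-deformation families through weighted blow-ups, hypersurfaces in weighted projective spaces, and toric degenerations, and proves completeness via explicit bounds on the discrete invariants $(m,(-K)^2,\rho)$. The bijection between the $26$ polygon mutation classes and the $26$ class-TG surfaces then follows from the qG-degeneration/mutation correspondence of \cite{MirrorSymmetryandtheClassificationofOrbifoldDelPezzoSurfaces}: each Fano polygon $P$ provides a toric qG-degeneration of a class-TG surface, and mutations of $P$ yield qG-deformation equivalences between toric models, so the assignment descends to a well-defined injection on mutation classes.

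The main obstacle will be isolating the three non-TG families among the $29$. Because $\frac{1}{3}(1,1)$ is a residual (non-T) singularity, its local qG-deformation functor is trivial, so any qG-deformation preserves the basket of R-singularities; hence the class-TG families among the $29$ are exactly those whose basket is realised by some Fano polygon. Comparing the $29$ surface baskets against the $26$ baskets produced on the polygon side isolates the three non-TG candidates, which must then be verified — by explicit obstruction-theoretic analysis of their qG-deformation spaces, as in \cite{DelPezzoSurfaceswith1/311points} — to admit no toric qG-degeneration whatsoever.
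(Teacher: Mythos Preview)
The paper does not prove this theorem at all: it is quoted in the introduction as a background result, with the proof deferred entirely to the cited references \cite{DelPezzoSurfaceswith1/311points} and \cite{MinimalityandMutationEquivalenceofPolygons}. There is therefore no ``paper's own proof'' to compare against; the statement functions purely as motivation for the new algorithm and for Theorems~\ref{1.5} and~\ref{1.6}.

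That said, your sketch is a fair outline of how the cited references actually establish the result, with two caveats. First, you propose to ``apply the classification algorithm developed later in this paper'' for the polygon side, but this is anachronistic: the 26 mutation classes were enumerated in \cite{MinimalityandMutationEquivalenceofPolygons} by their own methods, and the present paper's Algorithm~2 is a subsequent refinement aimed at baskets beyond $\{m\times\tfrac{1}{3}(1,1)\}$. Second, the bijection between the 26 polygon classes and the 26 class-TG families is not deduced abstractly from the (still conjectural) correspondence of \cite{MirrorSymmetryandtheClassificationofOrbifoldDelPezzoSurfaces}; rather, the two lists are produced independently and then matched explicitly by comparing invariants such as degree and Hilbert series. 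Your injectivity argument via ``mutations yield qG-deformation equivalences'' is fine, but surjectivity onto the class-TG families and the identification of the three non-TG families are handled in \cite{DelPezzoSurfaceswith1/311points} by direct case analysis rather than by the basket-comparison you describe.
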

\bigskip

Within this context we wish to classify Fano polygons with a given singularity content. Assuming Conjecture A holds, this is equivalent to a classification of locally qG-rigid del Pezzo surfaces admitting a TG degeneration. We hope to provide further evidence that this is indeed the case. The main results of this paper are an efficient algorithm to produce representations (called minimal polygons) for the mutation-equivalent classes with a given singularity content and;
\bigskip

\begin{thm} \label{1.5}
There are precisely 14 mutation-equivalence classes of Fano polygons with singularity content $\Big(n, \{ m_{1} \times \frac{1}{3}(1,1), m_{2} \times \frac{1}{6}(1,1) \} \Big)$ with $m_{1} \geq 0, m_{2} > 0$.
\end{thm}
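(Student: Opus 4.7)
The plan is to apply the algorithm developed earlier in the paper to the specific singularity content
$\mathcal{B} = \{m_1 \times \tfrac{1}{3}(1,1),\ m_2 \times \tfrac{1}{6}(1,1)\}$ with $m_2 > 0$, and then enumerate the resulting finite list of minimal polygons, pruning duplicates under $GL_2(\mathbb{Z})$-equivalence and under mutation.

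First I would translate the singularity content into combinatorial data on the boundary of a candidate Fano polygon $P$. Each cone associated to a residual $\tfrac{1}{3}(1,1)$ point is (up to $GL_2(\mathbb{Z})$) generated by $(1,0)$ and $(-1,3)$, contributing an edge of lattice length $1$ and height $3$; each $\tfrac{1}{6}(1,1)$ cone is generated by $(1,0)$ and $(-1,6)$, contributing length $1$ and height $6$. In addition, chains of $T$-cones of type $\tfrac{1}{dk^2}(1,dkn-1)$ may appear between the residual cones; these are exactly the edges accounted for by the integer $n$ in the singularity content. This information, together with Akhtar--Kasprzyk's formula relating the degree $(-K_{X_P})^2$ to the singularity content, constrains the possible edge arrangements and gives an upper bound on how many $T$-edges can be inserted.

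Next I would invoke the minimality condition: a polygon is minimal if no mutation produces a strictly smaller polygon in its mutation-equivalence class. Using the algorithm of the paper, I would start from a "seed" polygon whose edges realize the prescribed residual cones in a canonical order around the origin, and systematically add admissible $T$-edges. For each configuration one checks (i) that the vertices are primitive and the origin is strictly interior, (ii) that every edge is "minimal" in the sense that no mutation shortens the polygon, and (iii) that the singularity content is as required. The finiteness results quoted in the abstract guarantee that this process terminates.

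Finally, I would sort the resulting polygons into $GL_2(\mathbb{Z})$-equivalence classes and, for each representative, compute an invariant (e.g.\ the multiset of $(n, \mathcal{B}, \deg)$ together with further mutation invariants such as the Ehrhart series or a normal form under the mutation graph) to distinguish mutation-equivalence classes. The expected output is a table of $14$ inequivalent minimal polygons. The main obstacle will be Step (ii): certifying minimality and verifying that two distinct minimal polygons are not mutation-equivalent requires a careful search through the mutation graph, since a polygon can have many admissible mutation directions and one must rule out all shortening moves. Managing this search efficiently --- and confirming completeness so that the count is exactly $14$ and not larger --- is the computational heart of the argument.
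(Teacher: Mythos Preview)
Your overall plan --- apply the paper's algorithm and then sort by mutation-equivalence --- matches the paper's strategy, but there are two concrete problems, one minor and one a genuine gap.

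The minor one: your combinatorial description of the $\tfrac{1}{6}(1,1)$ cone is wrong. Writing $\tfrac{1}{6}(1,1)=\tfrac{1}{kr}(1,kc-1)$ gives $k=2$, $r=3$, so the corresponding edge has lattice height $3$ and lattice length $2$, not height $6$ and length $1$. (Your own generators $(1,0)$ and $(-1,6)$ already show this: the edge direction is $(-2,6)=2(-1,3)$, so the length is $2$ and the height is $3$.) This matters because $m_{\mathcal{B}}=3$, not $6$, and the algorithm's case split and the bounds in Lemma~\ref{4.2} are driven by that number.

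The genuine gap is the finiteness input. You say that the degree formula ``constrains the possible edge arrangements and gives an upper bound on how many $T$-edges can be inserted.'' It does bound $n$, but it does \emph{not} bound $m_{2}$: one computes $A_{\frac{1}{6}(1,1)}=-\tfrac{2}{3}<0$, so
\[
(-K_{X_P})^{2}=12-n-\tfrac{5}{3}m_{1}+\tfrac{2}{3}m_{2},
\]
and positivity of the degree puts no ceiling on $m_{2}$ at all. Without an a~priori bound on $m_{2}$ you cannot run the algorithm finitely many times, and your ``finiteness results quoted in the abstract'' do not supply this bound on their own for a whole family of baskets indexed by $m_{2}$. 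The paper fills this hole with a separate combinatorial argument (Lemma~\ref{5.1}): using $GL(N)$-moves and mutations to place two $\tfrac{1}{6}(1,1)$ edges adjacently at height $3$, one checks directly that a third R-edge of height $3$ cannot be fitted in while keeping the polygon convex, primitive and Fano with $m_P=3$. This yields $m_{1}+m_{2}\le 2$, after which the degree formula bounds $n\le 13$ and the algorithm terminates. Your proposal needs an argument of this kind before the enumeration can be claimed complete.

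A smaller point on the last step: the paper separates the two classes with identical singularity content (your analogue of polygons $1.7$ and $1.8$) by comparing classical periods of the maximally mutable Laurent polynomials, which is a proven mutation invariant. Ehrhart data is not mutation-invariant in general, so if you go that route you should name a genuine invariant.
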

\bigskip

\begin{thm} \label{1.6}
There are precisely 12 mutation-equivalence classes of Fano polygons with singularity content $\Big(n, \{ m \times \frac{1}{5}(1,1) \} \Big)$ with $m > 0$.
\end{thm}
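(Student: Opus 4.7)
The plan is to specialise the minimal-polygon classification algorithm developed earlier in the paper to the basket $\mathcal{B} = \{ m \times \frac{1}{5}(1,1) \}$ and then to enumerate the mutation-equivalence classes it produces. A key observation is that $\frac{1}{5}(1,1)$ is a genuine residual singularity, not a T-singularity of the form $\frac{1}{dr^{2}}(1,dr-1)$, so each of the $m$ copies appearing in $\mathcal{B}$ must sit at an actual vertex of any minimal representative $P$, rather than inside a T-chain along some edge. This gives the enumeration a well-defined skeleton: $m$ prescribed R-vertices cyclically arranged, joined by edges whose interior lattice structure accounts for the $n$ T-cones.

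The first concrete step is to bound $m$. The combinatorial identities linking the anticanonical degree $(-K_{X_{P}})^{2}$, the T-contribution $n$, and the residual basket, together with the convexity of $P$ and the requirement that the origin lie strictly in its interior, force $m$ into a finite range. For each admissible $m$ I would then run the algorithm: fix the cyclic sequence of $m$ residual vertices (each locally a $\frac{1}{5}(1,1)$ cone), enumerate the T-chains that can be inserted along each edge between consecutive residual vertices, and retain only those configurations that close up into a convex Fano polygon with the prescribed singularity content.

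Next, I would reduce the resulting list modulo $GL(N)$-equivalence and discard those polygons that fail the minimality condition, i.e.\ those admitting a mutation that strictly decreases them. The remaining minimal polygons are then grouped into mutation-equivalence classes: invariants such as $(-K_{X_{P}})^{2}$ and the singularity content itself separate most classes, while any remaining coincidences between distinct minimal polygons would be resolved by exhibiting explicit mutations relating them.

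The main obstacle will be the combinatorial book-keeping in the enumeration step. The number of candidate T-chains on a given edge can grow rapidly with $n$, and verifying minimality requires checking, for every edge of every candidate, that no reducing mutation exists along that edge. Controlling this case analysis uniformly in $m$---and proving that the list genuinely terminates---is what the algorithm is designed to handle, in exactly the same spirit as the proof of Theorem~\ref{1.5}. Once the shortlist is produced and the pairwise mutation relations on it are worked out, the count should collapse to exactly 12 distinct mutation-equivalence classes.
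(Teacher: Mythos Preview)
Your outline follows the same broad strategy as the paper---bound the parameters, enumerate minimal polygons via the algorithm, then sort into mutation-equivalence classes---but there are two genuine gaps.

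First, a conceptual slip: the residual singularities do not ``sit at vertices'' of $P$. In the spanning-fan convention used throughout the paper, each cyclic quotient singularity corresponds to a two-dimensional cone, hence to an \emph{edge} of $P$ (or to the R-sub-cone left over in an edge once its T-cones are stripped off). Your picture of ``$m$ prescribed R-vertices cyclically arranged, joined by edges whose interior lattice structure accounts for the $n$ T-cones'' is therefore not the right skeleton. The paper instead fixes a special facet and grows the polygon edge by edge (Algorithm~2), with a separate case split according to whether the maximal local index $m_P$ equals $m_{\mathcal{B}}=5$ or exceeds it; the bound $m\le 2$ comes from a direct combinatorial lemma about adjacent height-$5$ R-cones, not merely from the degree formula.

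Second, and more seriously, your plan for separating mutation-equivalence classes is incomplete. You propose to use $(-K_{X_P})^2$ and the singularity content to separate most classes, and to resolve ``remaining coincidences'' by \emph{exhibiting explicit mutations}. But an explicit mutation only shows two polygons are equivalent; it cannot show they are \emph{in}equivalent. In the actual classification two minimal polygons (numbered 2.6 and 2.7 in the paper) share the same singularity content $\big(7,\{\tfrac{1}{5}(1,1)\}\big)$ and hence the same degree $\tfrac{24}{5}$, yet lie in distinct mutation classes. To prove this the paper invokes a finer invariant---the classical period of the maximally mutable Laurent polynomial supported on each polygon, which is mutation-invariant by \cite[Lemma~1]{MinkowskiPolynomialsandMutations}---and checks that the two period sequences disagree. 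Without such an invariant your argument cannot certify the final count of $12$ rather than $11$.
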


\subsection*{Overview of Paper:}

\indent In sections \ref{Section 2} and \ref{Section 3}, we cover technical material. This has been inspired mostly from papers \cite{MirrorSymmetryandtheClassificationofOrbifoldDelPezzoSurfaces, MinkowskiPolynomialsandMutations, SingularityContent, MinimalityandMutationEquivalenceofPolygons}. Section \ref{Section 2} introduces the theory of mutations and singularity content. We will give a formal definition of these notions, as well as some useful results and examples. In section \ref{Section 3} we introduce the definition of minimal polygons and provide a number of equivalent conditions. 

Sections \ref{Section 4}--\ref{Section 6}, consist of classification results. In section \ref{Section 4} we describe an algorithm to find a classification of minimal Fano polygons considered up to mutation-equivalence with a fixed singularity content. In sections \ref{Section 5} and \ref{Section 6} we prove Theorems \ref{1.5} and \ref{1.6} respectively, via use of this algorithm. If the resulting Fano polygon can be considered (via mutation if necessary) as a triangle, then we know that the corresponding toric variety will be the quotient of a weighted projective space.

\section{Mutations of Fano Polygons and Singularity Content.} \label{Section 2}

\subsection{Mutations}

We recall the definition of the Minkowski sum of lattice polygons.
\\

\begin{dfn}
Let $P,Q \subset N_{\mathbb{R}}$ be two lattice polytopes. We define the \emph{Minkowski sum} of $P$ and $Q$ by
\[ P+Q = \{ p+q : p \in P, q \in Q \}. \]
By convention $P + \varnothing = \varnothing$.
\end{dfn}

Let $P \subset N_{\mathbb{R}}$ be a polygon, and $E$ be an edge of $P$. Consider the primitive inward pointing normal $\omega_{E} \in M$ of this edge. This vector can be thought of as a grading function on the polygon $P$. For $h \in \mathbb{Z}$, define
 \[ \omega_{h}(P) = \text{conv} \{ v \in N \cap P : \omega_{E}(v) = h \}. \]
Note that $\omega_{h}(P)$ may be empty (indeed it will be for infinitely many values of $h$) and that $\omega_{E}(E) = -r_{E}$, where $r_{E}$ is the height of E. Choose $v_{E}$ to be a primitive vector of the lattice $N$ such that $\omega_{E}(v_{E}) =0$. Note this is uniquely defined up to sign. Set $F = \text{conv}\{\mathbf{0},v_{E}\}$. $F$ is a line of lattice length $1$ and height $0$, that is parallel to $E$.
\\

\begin{dfn}
For all $h < 0$, suppose that there exists $G_{h} \subset N_{\mathbb{R}}$ such that
\[ \{ v \in \mathcal{V}(P) : \omega_{E}(v) = h \} \subseteq  G_{h} + \lvert h \rvert F   \subseteq \omega_{h}(P) . \]
In the case $\omega_{h}(P) = \varnothing$ the inclusion holds taking $G_{h} = \varnothing$. Then we define the \emph{mutation} of P given by $\omega_{E}$, $F$ and $G_{h}$ to be
\[ \text{mut}_{(\omega_{E},F)}(P) = \text{conv} \Big( \underset{h<0}{\bigcup} G_{h} \cup \underset{h > 0}{\bigcup} (\omega_{h}(P) + hF)  \Big) \subset N_{\mathbb{R}} \]
\end{dfn}

$\text{mut}_{(\omega,F)}(P)$ is independent of the choice of $G_{h}$ (assuming existence of the mutation). Hence our notation for a mutation makes no reference to $G_{h}$. If there is no possible choice of $G_{h}$, then we cannot define a mutation. We understand exactly when this is the case:

\begin{lem}\cite[Lemma 2.3]{MinimalityandMutationEquivalenceofPolygons}
Let $E$  be an edge of a Fano polygon $P$ with primitive inner normal vector $\omega_{E} \in M$. Then $P$ admits a mutation with respect to $\omega$ if and only if $\lvert E \cap N \rvert \geq r_{E}$.
\end{lem}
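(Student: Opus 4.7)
The heart of the matter is the constraint at the slice $\omega_{-r_{E}}(P)=E$: for any mutation, one needs a lattice set $G_{-r_{E}}$ so that the vertices of $P$ at height $-r_{E}$ lie in $G_{-r_{E}}+r_{E}F$, which in turn must fit inside $E$. Since the vertices of $P$ at height $-r_{E}$ are exactly the two endpoints $A,B$ of $E$, the constraint at this particular height is the genuinely tight one. The strategy is to show (i) that the existence of $G_{-r_{E}}$ alone forces the lattice length condition, and (ii) that conversely, once the condition holds, valid sets $G_{h}$ can be produced for every $h<0$, with the only nontrivial choice being at $h=-r_{E}$.

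\textbf{Forward direction.} Assume a mutation exists; then in particular a valid $G_{-r_{E}}$ exists. Because $F$ is parallel to $E$, the Minkowski sum $G_{-r_{E}}+r_{E}F$ is a line segment parallel to $E$. The double inclusion $\{A,B\}\subseteq G_{-r_{E}}+r_{E}F\subseteq E$ forces this sum to equal $E$, since any proper subsegment of $E$ lying parallel to $E$ misses at least one endpoint. Additivity of lattice length under Minkowski sum of parallel segments then gives $\ell_{E}=\ell_{G_{-r_{E}}}+r_{E}$, and as $\ell_{G_{-r_{E}}}\geq 0$ this yields $\ell_{E}\geq r_{E}$, i.e.\ the stated inequality.

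\textbf{Reverse direction.} Assume the edge length condition. Choose $G_{-r_{E}}$ to be any lattice sub-segment of $E$, parallel to $F$, of the required lattice length $\ell_{E}-r_{E}$; then by construction $G_{-r_{E}}+r_{E}F=E$, which contains $A,B$ and is contained in $\omega_{-r_{E}}(P)$. For $h$ with $-r_{E}<h<0$, if no vertex of $P$ has $\omega_{E}$-value $h$ then $G_{h}=\varnothing$ trivially satisfies the required inclusions. Otherwise, let $v$ be such a vertex; since $v$ is a vertex of $P$ it is an extreme point of the slice $\omega_{h}(P)$, and the claim is that one can shift a translate of $|h|F$ inside $\omega_{h}(P)$ beginning at $v$. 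This shift then defines $G_{h}$.

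\textbf{Main obstacle.} The real technical content is establishing that, for vertices at intermediate heights $h\in(-r_{E},0)$, the slice $\omega_{h}(P)$ is lattice-long enough to accommodate the translate of $|h|F$. Here one uses convexity of $P$ together with the Fano hypothesis that $\mathbf{0}$ lies strictly inside $P$: the slice at height $0$ straddles the origin and so has positive width, the slice at height $-r_{E}$ is $E$ with $\ell_{E}\geq r_{E}$, and the boundary of $P$ between the vertex $v$ and the corresponding endpoint of $E$ is a piecewise-linear convex curve. A careful interpolation argument between the endpoints of $E$, the boundary through $v$, and the origin-containing slice at height $0$ yields the required lattice length of $\omega_{h}(P)$ at each such $h$. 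This is where the Fano hypothesis is genuinely used, since without strict interiority of the origin one could construct convex polygons with long edges $E$ but intermediate slices that are too short.
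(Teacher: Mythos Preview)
The paper does not prove this lemma: it is quoted without argument from \cite{MinimalityandMutationEquivalenceofPolygons}, so there is no in-text proof to compare your attempt against.

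Judged on its own, your forward direction is clean and correct. The reverse direction, however, stops precisely where the real work lies. You correctly isolate the obstacle---at an intermediate height $h\in(-r_{E},0)$ carrying a vertex $v$ of $P$ one must exhibit $G_{h}$ with $v\in G_{h}+|h|F\subseteq\omega_{h}(P)$---and you correctly observe that the Fano hypothesis $\mathbf{0}\in P^{\circ}$ is what makes this possible. But announcing that ``a careful interpolation argument'' finishes the job is not a proof; that interpolation \emph{is} the nontrivial content of the reverse implication, and you have neither supplied it nor treated the case of two vertices of $P$ at the same intermediate height. For the record, the missing estimate is short once seen: since $\mathrm{conv}\{\mathbf{0},A,B\}\subset P$, the slice of $P$ at height $h$ contains the segment $\big[\tfrac{|h|}{r_{E}}A,\tfrac{|h|}{r_{E}}B\big]$ of width $\tfrac{|h|}{r_{E}}\ell_{E}\ge|h|$; if $v$ is (say) the rightmost point of $P$ at height $h$ then its first coordinate dominates that of $\tfrac{|h|}{r_{E}}B$, whence $v-|h|v_{E}$ still lies weakly right of $\tfrac{|h|}{r_{E}}A$ and hence inside $P$, so $G_{h}=\{v-|h|v_{E}\}$ works. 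The symmetric estimate on the left handles the two-vertex case. A minor aside: what your forward argument actually yields is $\ell_{E}\ge r_{E}$, i.e.\ $|E\cap N|\ge r_{E}+1$; this is the correct form of the criterion, and the weak inequality $|E\cap N|\ge r_{E}$ printed here appears to be a transcription slip from the cited source rather than an error on your part.
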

\bigskip

\begin{ex} \label{2.4}
Consider the polygon $P = \text{conv} \{ (1,0), (0,1), (-5,-1) \}$ corresponding to the weighted projective space $\mathbb{P}(1,1,5)$. This will be used as running example throughout the paper. P has three edges, however by Lemma 2.3 only two of these edges will determine a mutation. We mutate with respect to the edge $E = \text{conv}\big\{(1,0),(0,1)\big\}$. The primitive inner pointing normal is given by $\omega_{E} = (-1,-1) \in M$. Set $F = \text{conv} \big\{ \mathbf{0},(1,-1)\big\}$, $G_{-1}=\{(0,1)\}$ and $G_{h} = \varnothing$ for $h<-1$. We calculate the mutation of $P$ with respect to the primitive inner point normal $\omega_{E}$, the factor $F$ and the polygon $G_{-1}$. Then:

\begin{align*}
Q &= \text{mut}_{(\omega,F)}(P) \\ 
&= \text{conv} \Big\{ \big(G_{-1}\big) \cup \big(\omega_{0}(P)\big) \cup \big(\omega_{1}(P) + F \big) \cup \big(\omega_{2}(P) + 2F \big) \cup \cdots \cup \big(\omega_{7}(P) + 7F \big) \Big\} \\ 
&= \text{conv} \Big\{ (0,1) , (-5,-1), (1,-7) \Big\}.
\end{align*}

Q corresponds to the toric variety $\mathbb{P}(1,5,36)$.
\end{ex}

We have a number of additional properties of mutations:
\begin{itemize}
\item Since we always consider polygons up to $GL(N)$-equivalence, we have that $\text{mut}_{(\omega,F)}(P) = \text{mut}_{(\omega,-F)}(P)$. Hence the sign of $v_{E}$ is  not important.
\item Mutation is invertible: If $Q = \text{mut}_{(\omega,F)}(P)$, then $P = \text{mut}_{(-\omega,F)}(Q)$.
\item \cite[Proposition 2]{MinkowskiPolynomialsandMutations} $P$ is a Fano polytope if and only if $\text{mut}_{(\omega,F)}(P)$ is a Fano polytope.
\end{itemize}
\bigskip

\begin{dfn}
Let $P,Q \subset N_{\mathbb{Q}}$ be two Fano polygons. We say $P$ and $Q$ are \emph{mutation-equivalent} if there exists a finite sequence of polygons $P_{0},P_{1},\cdots, P_{n}$ such that $P_{0} \cong P$, $P_{n} \cong Q$ and, for all $i \in \{ 0,\cdots, n-1 \}$, we have that $P_{i+1} = \text{mut}_{(\omega_{i},F_{i})}(P_{i})$ for some $\omega_{i}$ and $F_{i}$.
\end{dfn}

Mutation-equivalence defines an equivalence relation.

\subsection{Singularity Content}

We recall the definition of singularity content introduced in \cite{SingularityContent}.

Recall that a \emph{quotient singularity} $\frac{1}{R}(a,b)$ is given by the action of $\mu_{R}$ on $\mathbb{C}^{2}$ by $(x,y) \mapsto (\epsilon^{a}x,\epsilon^{b}y)$ where $\epsilon$ is an $R^{\text{th}}$ root of unity, and considering $Z = \text{Spec}(\mathbb{C}[x,y]^{\mu_{R}})$. The germ of the origin is the singularity.

A \emph{cyclic quotient singularity} is a quotient singularity $\frac{1}{R}(a,b)$ such that $\gcd(R,a)=1$ and $\gcd(R,b)=1$. In this situation, set $k = \gcd(a+b,R)$ so that $a+b=kc$ and $R=kr$. We can write the cyclic quotient singularity as $\frac{1}{kr}(1,kc-1)$.
\bigskip

\begin{dfn}
A cyclic quotient singularity $\frac{1}{kr}(1,kc-1)$ is a \emph{T-singularity} if $r \mid k$.
\end{dfn}

Every T-singularity can be written in the form $\frac{1}{nr^{2}}(1,nrc-1)$. where $k=nr$ If $n=1$, we refer to the singularity as a primitive T-singularity. Kollar--Shepherd-Barron, \cite{ThreefoldsandDeformationsofSurfaceSingularities}, show a cyclic quotient singularity is a T-singularity if and only if it admits a qG-smoothing. 
\bigskip

\begin{dfn}
A cyclic quotient singularity $\frac{1}{kr}(1,kc-1)$ is an \emph{R-singularity} if $k<r$.
\end{dfn}

A singularity is an R-singularity if and only if it is rigid under qG-deformation.

Consider an arbitrary cyclic quotient singularity $\frac{1}{kr}(1,kc-1)$. By the Euclidean Algorithm there exists unique non-negative integers $n$ and $k_{0}$ such that $k=nr+k_{0}$. If $k_{0}=0$ then the singularity is qG-smoothable. If $k_{0} >0$, then the singularity is qG-deformation equivalent to a $\frac{1}{k_{0}r}(1,k_{0}c-1)$ cyclic quotient singularity. 
\bigskip

\begin{dfn}[\cite{SingularityContent}]
Let $\sigma = \frac{1}{kr}(1,kc-1)$ be a cyclic quotient singularity. By the Euclidean algorithm we have $k=nr+k_{0}$. The \emph{residue} of $\sigma$ is given by 
\[ \text{res}(\sigma) = \begin{cases} \varnothing \text{, if } k_{0} = 0 \\ \frac{1}{k_{0}r}(1,k_{0}c-1) \text{, otherwise.} \end{cases}  \]
The \emph{singularity content} of $\sigma$ is given by the pair 
\[ \text{SC}(\sigma) = \big(n, \text{res}(\sigma)\big). \]
\end{dfn}

This discussion of T-singularities and R-singularities has a natural description in the language of cones. Let $C \subset N_{\mathbb{Q}}$ be a cone with generating rays described by the primitive lattice points $p_{0}$ and $p_{1}$. Consider $E = \text{conv}\{p_{0},p_{1}\}$. Let $h$ be the height of $E$ and $l$ be the lattice length.  By the Euclidean algorithm we have $l=hn+r$. We divide $C$ into separate sub-cones $C_{0},\cdots,C_{n}$ where $C_{1},\cdots,C_{n}$ (known as T-cones) have lattice length $h$, and $C_{0}$ has lattice length $r$ and is known as an R-cone. Each sub-cone corresponds to a singularity of the corresponding toric variety. The T-cones correspond to the T-singularities and the R-cones to the R-singularities. This decomposition is reflected in the cone $C$ corresponding to the cyclic quotient singularity $\frac{1}{kr}(1,kc-1)$, which can be qG-smoothed to the cyclic quotient singularity $\frac{1}{k_{0}r}(1,k_{0}c-1)$ corresponding to the sub-cone $C_{0}$. We define the singularity content of $E$ to be the singularity content of the corresponding cyclic quotient singularity. The singularities are independent of the choice of decomposition.
\bigskip

\begin{dfn}
Let $P \subset N_{R}$ be a polygon. Label the edges of $P$ in clockwise order $E_{1},\cdots E_{k}$. Each edge $E_{i}$ corresponds to a cyclic quotient singularity $\sigma_{i}$ corresponding to this cone. Let $\text{SC}(E_{i}) = \big( n_{i},\text{res}(\sigma_{i}) \big)$. Set $n = \sum\limits_{i=1}^{k} n_{i}$ and $\mathcal{B} = \{ \text{res}(\sigma_{1}) , \cdots, \text{res}(\sigma_{k}) \}$, where $\mathcal{B}$ is an ordered set known as the basket  of R-singularities. We then define the \emph{singularity content} of $P$ to be
\[ \text{SC}(P) = \big( n,\mathcal{B} \big). \]
\end{dfn}

For

\begin{ex}
For $P=\text{conv}\{(0,1),(1,0),(-5,-1)\}$, we calculate that $\text{SC}(P) = \Big( 2, \big\{ \frac{1}{5}(1,1) \big\} \Big)$.
\end{ex}
\bigskip

\begin{prop} \cite[Proposition 3.6]{SingularityContent}
Singularity content is an invariant of Fano polygons under mutation.
\end{prop}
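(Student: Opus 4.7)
My plan is to analyze the mutation $Q = \text{mut}_{(\omega_E, F)}(P)$ edge-by-edge, verifying separately that the ordered basket $\mathcal{B}$ of R-singularities is preserved and that the total T-count $n = \sum n_i$ is preserved.

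First, I would use the height function $\omega = \omega_E$ to set up the bookkeeping. Vertices of $P$ at positive height $h$ translate to $v + hF$ under mutation; vertices at negative height are repackaged via the pieces $G_h$ from the mutation definition; and the edge $E$ itself sits at the minimum height $-r_E$. This induces a natural cyclic correspondence between the edges of $P$ and those of $Q$, possibly creating or destroying ``flat'' edges parallel to $F$ at the extremal heights.

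Next, for each edge $E'$ of $P$ I would compute the lattice length $l'$ and lattice height $h'$ of the corresponding edge of $Q$ directly from the heights of its endpoints. The crux of the local analysis is that in the Euclidean decomposition $l = hn + r$, the residue $r$ is always preserved while the quotient $n$ may change: shifting a height-$h$ slice by $hF$ alters the relevant lattice length by a multiple of $h$, so the R-cone at each edge survives while only T-cones are inserted or deleted. Combined with the check that the cyclic ordering of edges is respected, this gives invariance of $\mathcal{B}$ as an ordered basket.

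The main obstacle is showing that the total T-count $n$ is invariant, i.e., that the T-cones gained by some edges cancel those lost by others. I would handle this via a global telescoping identity summing the change $\Delta n_i$ over all edges in terms of signed vertex heights, and verifying directly that the mutation rule leaves the sum unchanged. An alternative route is to invoke the mutation-invariance of the anticanonical degree $(-K_{X_P})^2$ together with the known expression of this degree purely in terms of $(n, \mathcal{B})$: once $\mathcal{B}$ is shown invariant, this forces $n$ to be so as well. I expect this global accounting to be the most delicate step, while the local edge computation should be a routine coordinate verification.
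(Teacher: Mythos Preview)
The paper does not give its own proof of this proposition: it is stated with a citation to \cite[Proposition~3.6]{SingularityContent} and followed only by the remark that Example~\ref{2.4} illustrates the invariance. There is therefore no in-paper argument to compare your proposal against.

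That said, your outline is broadly in the spirit of the argument in the cited source. The local observation that mutation changes the lattice length of each edge by a multiple of its height, hence preserves the Euclidean remainder and with it the residual cone, is the correct mechanism for invariance of~$\mathcal{B}$. Your proposed alternative for the invariance of~$n$ --- deducing it from the mutation-invariance of $(-K_{X_P})^2$ together with Proposition~\ref{2.13} once $\mathcal{B}$ is known to be fixed --- is clean and avoids the telescoping bookkeeping, provided you are careful that the degree invariance is established independently (e.g.\ via the dual-side description of mutation in \cite{MinkowskiPolynomialsandMutations}) and not by appealing to singularity content itself. The one place your sketch is slightly glib is the edge-by-edge correspondence: edges of $P$ that cross between positive and negative $\omega$-height do not simply translate, and the edge $E$ itself can collapse to a vertex while a new edge appears at maximal height, so the ``natural cyclic correspondence'' needs a little more care than you indicate.
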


Indeed note that in Example \ref{2.4}, that $\text{SC}(P) = \text{SC}(Q) = \big(2,\{\frac{1}{5}(1,1)\}\big)$.

\subsection{Hirzebruch--Jung Continued fractions and Applications to Algebraic Geometry}

There is information about the del Pezzo surface $X_{P}$ corresponding to a polygon $P$ written into the singularity content; $X_{P}$ is qG-deformation equivalent to a del Pezzo surface $X$ such that the topological Euler number $\chi \big(X \backslash \text{Sing}(X)\big) = n$  and the singular points are given by $\text{Sing}(X) = \mathcal{B}$. The anticanonical degree and Hilbert series of $X_{P}$ is totally determined by the singularity content. See \cite{MirrorSymmetryandtheClassificationofOrbifoldDelPezzoSurfaces, SingularityContent}. 
\bigskip

\begin{dfn}
Let $p,q$ be positive coprime integers. Then the \emph{Hirzebruch--Jung continued fraction} is given by the continued fraction of the form:
\[ \frac{p}{q} = a_{1} - \frac{1}{a_{2} - \frac{1}{a_{3} - \frac{1}{\ddots}} } =  [a_{1},\cdots a_{k}]. \]
\end{dfn}

Given a cyclic quotient singularity $\sigma = \frac{1}{R}(1,a-1)$, construct the variety $Z = \text{Spec}(\mathbb{C}[x,y]^{\mu_{R}})$ as in the definition of quotient singularity. We can calculate information about a minimal resolution of $Z$ from the Hirzebruch--Jung continued fraction of $\frac{R}{a-1}$. Consider the minimal resolution $\pi: Y \rightarrow Z$ with
\[ K_{Y} = \pi^{*} K_{Z} + \sum\limits_{i=1}^{k} d_{i}E_{i}. \] 

Let $[a_{1},\cdots,a_{k_{\sigma}}]$ be the Hirzebruch--Jung continued fraction of $\frac{R}{a-1}$. The values $-a_{i}$ are the self-intersection numbers of the exceptional divisors $E_{i}$. Additionally set:
\[ \alpha_{1} = \beta_{k_{\sigma}} = 1, \]
\[ \frac{\alpha_{i}}{\alpha_{i-1}} = [a_{i-1},\cdots,a_{1}], \text{ for } i \in \{ 2,\cdots,k_{\sigma} \}, \]
\[ \frac{\beta_{i}}{\beta_{i+1}} = [a_{i+1},\cdots,a_{k_{\sigma}}], \text{ for } i \in \{ 1,\cdots,k_{\sigma}-1 \}, \]

then the discrepancies are given by $d_{i} = -1 + \frac{\alpha_{i}+ \beta_{i}}{R}$. For further reading on minimal resolutions see Reid \cite{YoungPersonsguidetoCanonicalSingularities}.
\bigskip

\begin{prop}\cite[Proposition 3.3, Corollary 3.5]{SingularityContent} \label{2.13}
Let $X_{P}$ be a complete toric surface with corresponding Fano polygon $P$. Suppose $X_{P}$ has singularity content $(n,\mathcal{B})$. Then
\[ (-K_{X_{P}})^{2} = 12 - n - \sum\limits_{\sigma \in \mathcal{B}} A_{\sigma} , \]
where $A_{\sigma} = k_{\sigma} + 1 - \sum\limits_{i=1}^{k_{\sigma}} d_{i}^{2} a_{i} + 2 \sum\limits_{i=1}^{k_{\sigma}-1} d_{i}d_{i+1}$. Furthermore the anticanonical Hilbert series of $X_{P}$ admits a decomposition
\[ \text{Hilb}(X_{P}, -K_{X_{P}}) = \frac{1+(K_{X_{P}}^{2}-2)t + t^{2}}{(1-t)^{3}} + \sum\limits_{\sigma \in \mathcal{B}} Q_{\sigma}(t), \]
where $Q_{\frac{1}{R}(1,a-1)} = \frac{1}{1-t^{R}} \sum\limits_{i=1}^{R-1}(\delta_{ai}-\delta_{0})t^{i-1}$ is the Riemann-Roch contribution coming from the singularity $\frac{1}{R}(1,a-1)$ and $\delta_{j} = \frac{1}{R}\sum\limits_{\epsilon \in \mu_{R}, \epsilon \neq 1} \frac{\epsilon^{j}}{(1-\epsilon)(1-\epsilon^{a-1})}$ are the Dedekind sums.
\end{prop}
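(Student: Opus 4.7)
My plan is to reduce everything to intersection theory on a minimal resolution $\pi\colon Y \to X_P$ (a smooth toric surface obtained by subdividing the fan of $X_P$ along the Hirzebruch--Jung continued fraction of each singular cone) and then show that contributions from T-cones assemble into the ``smooth'' term while contributions from R-cones are precisely the $A_\sigma$ and $Q_\sigma(t)$ corrections.

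First, for the anticanonical degree, I would write $\pi^{*}K_{X_{P}} = K_{Y} - \sum_{\sigma}\sum_{i=1}^{k_{\sigma}} d_{i}^{(\sigma)} E_{i}^{(\sigma)}$, where the $E_{i}^{(\sigma)}$ form the chain of exceptional $\mathbb{P}^{1}$s over each cyclic quotient singularity $\sigma$. Their intersection matrix is known from the continued fraction data: $E_{i}^{(\sigma)} \cdot E_{i}^{(\sigma)} = -a_{i}$, $E_{i}^{(\sigma)} \cdot E_{i+1}^{(\sigma)} = 1$, and all other intersections vanish; adjunction on the smooth toric surface $Y$ gives $K_{Y} \cdot E_{i}^{(\sigma)} = a_{i} - 2$. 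Expanding $(-\pi^{*}K_{X_{P}})^{2}$ and substituting these identities produces, for each singularity, exactly
\[ -k_{\sigma} - 1 + \sum_{i=1}^{k_{\sigma}} d_{i}^{2} a_{i} - 2\sum_{i=1}^{k_{\sigma}-1} d_{i} d_{i+1} = -A_{\sigma} \]
(after using $K_{Y} \cdot E_{i} = a_{i}-2$ to combine the cross terms and telescoping through the chain). Next I would use Noether's formula on the smooth toric surface $Y$: since $\chi(\mathcal{O}_{Y})=1$ and the topological Euler number of a smooth complete toric surface equals the number of rays of its fan, I get $(-K_{Y})^{2} = 12 - \ell_{Y}$, where $\ell_{Y} = \ell_{P} + \sum_{\sigma \in \mathrm{Sing}(X_{P})} k_{\sigma}$.

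The decisive step is then to regroup the sum over all singularities of $X_{P}$ edge by edge: each edge $E$ of $P$ decomposes into $n_{E}$ T-cones and one R-cone, and the contribution of the T-cones to $\ell_{Y}$ and to the $A_{\sigma}$-terms must together collapse to exactly $-n_{E}$. Concretely, for a primitive T-singularity $\tfrac{1}{r^{2}}(1, rc-1)$ one checks by an elementary (but fiddly) continued-fraction calculation that $A_{\sigma} = -1$ and hence $k_{\sigma} - A_{\sigma} = k_{\sigma}+1$ exactly cancels the $k_{\sigma}$ new rays contributed to $\ell_{Y}$, leaving net contribution $1$; iterating this over the $n_{E}$ T-cones on edge $E$ yields the total correction $n = \sum n_{E}$. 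Combining with the R-cone contributions gives the claimed formula $(-K_{X_{P}})^{2} = 12 - n - \sum_{\sigma \in \mathcal{B}} A_{\sigma}$. I expect this collapse identity for T-singularities to be the main obstacle: it is the arithmetic heart of the ``singularity content is an invariant'' philosophy and has to be verified by an induction on the length of the continued fraction $[a_{1},\dots,a_{k_{\sigma}}]$ of the T-singularity.

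For the Hilbert series, I would apply the orbifold Riemann--Roch formula of Reid to $-mK_{X_{P}}$: the smooth part of the sum gives the standard Hilbert series $\frac{1+(K^{2}-2)t+t^{2}}{(1-t)^{3}}$ of a surface of degree $(-K_{X_{P}})^{2}=K^{2}$, while each cyclic quotient singularity $\frac{1}{R}(1,a-1)$ contributes a Dedekind-sum term, which one rearranges into the stated $Q_{\sigma}(t) = \frac{1}{1-t^{R}}\sum_{i=1}^{R-1}(\delta_{ai}-\delta_{0})t^{i-1}$. The same T-vs-R dichotomy used for the degree then shows that T-singularities contribute zero to the correction sum (the Dedekind sums for T-singularities telescope along the chain), so only the basket $\mathcal{B}$ appears, completing the proof.
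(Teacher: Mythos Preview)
The paper does not give its own proof of this proposition: it is quoted verbatim from \cite{SingularityContent} (Proposition~3.3 and Corollary~3.5 there) and used as a black box, so there is no argument in the present paper to compare against. Your overall strategy---pull back to the minimal resolution, apply Noether's formula, and for the Hilbert series invoke orbifold Riemann--Roch---is exactly the route taken in \cite{SingularityContent}, so in spirit you are reproducing the original proof rather than offering an alternative.

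That said, your bookkeeping contains a sign/constant slip that, while easily repaired, makes the argument as written incorrect. Expanding $(-\pi^{*}K_{X_P})^{2}$ and using $\pi^{*}K_{X_P}\cdot E_j=0$ to eliminate the cross-terms $2\sum d_i(2-a_i)$, the discrepancy correction from a single singularity is
\[
\sum_{i} d_i^{2}a_i \;-\; 2\sum_{i} d_i d_{i+1} \;=\; k_\sigma + 1 - A_\sigma,
\]
not $-A_\sigma$. Feeding this into $(-K_Y)^2 = 12 - \ell_P - \sum_\sigma k_\sigma$ one obtains the clean identity
\[
(-K_{X_P})^{2} \;=\; 12 - \sum_{E\in\mathcal{F}(P)} A_{\tau_E},
\]
summed over the actual singularities of $X_P$ (one per edge). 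Consequently a primitive $T$-singularity has $A_\sigma = 1$, not $-1$: for instance $\tfrac{1}{4}(1,1)$ has $k_\sigma=1$, $a_1=4$, $d_1=-\tfrac12$, giving $A_\sigma = 2 - \tfrac14\cdot 4 = 1$. The passage from this edge-by-edge formula to the stated one is then the additivity $A_{\tau_E} = n_E + A_{\mathrm{res}(\tau_E)}$ under the $T$/$R$ decomposition of a cone; this is precisely the content recorded later in the paper as Corollary~\ref{5.2}, and is the genuine ``collapse identity'' you should be proving. With these two corrections your outline goes through; the Hilbert-series half is fine as stated.
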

\bigskip

\begin{ex} \label{2.14}
$P=\text{conv}\{(0,1),(1,0),(-5,-1)\}$ has singularity content $\big(2,\{\frac{1}{5}(1,1)\}\big)$. The Hirzebruch--Jung continued fraction of the cyclic quotient singularity $\frac{1}{5}(1,1)$ is simply $[5]$, so $d_{1}  =-\frac{3}{5}$ and $A_{\frac{1}{5}(1,1)} = \frac{1}{5}$. Also $Q_{\frac{1}{5}(1,1)} = \frac{t-2t^{2}+t^{3}}{5(1-t^{5})}$ . Therefore the anticanonical degree and Hilbert series of $X = \mathbb{P}(1,1,5)$ are given by
\[ (-K_{X})^{2} = \frac{49}{5}, \]
\[ \text{Hilb} \big( X, -K_{X} \big) = \frac{1 + 8t + 2t^{3}-2t^{4} -8t^{6}-t^{7}}{(1-t^{5})(1-t)^{3}}. \]
\end{ex}

More generally for a polygon $P$ with $n$ primitive T-singularities and a basket of singularities $\mathcal{B} = \{ m \times \frac{1}{5}(1,1) \}$:
\[ (-K_{X_{P}})^{2} = 12 - n - \frac{1}{5}m, \]
\[ \text{Hilb}(X_{P}, -K_{X_{P}}) = \frac{-t^{7} + (n-10)t^{6} + (m-1)t^{5} -2mt^{4} + 2mt^{3} + (1-m)t^{2} + (10-n)t + 1 }{(1-t)^{3}(1-t^{5})}. \]

Hirzebruch--Jung fractions can be further studied in \cite{IntroductiontoToricVarieites, SurfaceCyclicQuotientSingularitiesandHirzeburchJungresolutions}.

\section{Minimal Fano Polygons.} \label{Section 3}

Mutation-equivalence classes raises the issue about our choice of representative when considering a mutation-equivalence class of polygons. This leads to the definition of a minimal polygon from \cite{MinimalityandMutationEquivalenceofPolygons}. For a polygon $P$, we use the notation $\partial P$ for the boundary of $P$, and $P^{\circ}$ for the interior.
\bigskip

\begin{dfn}
Let $P \subset N_{\mathbb{Q}}$ be a Fano polygon. We say that the polygon $P$ is \emph{minimal} if $\lvert \partial P \cap N \rvert \leq \lvert \partial Q \cap N \rvert$, for all $Q = \text{mut}_{(\omega,F)}(P)$.
\end{dfn}

In \cite{MinimalityandMutationEquivalenceofPolygons}, we are provided with a number of conditions that are equivalent to minimality:
\begin{itemize}
\item $\lvert P^{\circ} \cap N \rvert \leq  \lvert Q^{\circ}\cap N \rvert$, for all $Q = \text{mut}_{(\omega,F)}(P)$ where $P^{\circ}$ denotes the interior of $P$.
\item $\text{Vol}(P) \leq \text{Vol}(Q)$, for all $Q = \text{mut}_{(\omega,F)}(P)$.
\item $r_{1}+\cdots +r_{n} \leq s_{1}+\cdots +s_{n}$, for all $Q = \text{mut}_{(\omega,F)}(P)$, where $r_{i}$ are the Gorenstein indices of the primitive T-cones associated with $P$, and $s_{i}$ are the Gorenstein indices of the primitive T-cones associated with $Q$.
\item For an edge $E$ of $P$, let $\omega_{E} \in M$ be the primitive inner pointing normal of $E$. We define $h_{\text{min}} =  \min \{ \omega_{E}(v) : v \in P \}$ and $h_{\text{max}} =  \max \{ \omega_{E}(v) : v \in P \}$. If $\lvert E \cap N \rvert -1 \geq \lvert h_{\min} \rvert$, then $\lvert h_{\min} \rvert \leq h_{\max}$.
\end{itemize}

In every mutation-equivalence class we can find at least one minimal representative using algorithm 1 below.

\begin{algorithm}
\caption{Calculation of Minimal Polygons}\label{euclid}
\begin{algorithmic}[1]
\State \textbf{Input:} \text{Polygon $P$}
\State $\text{BoundaryPoints} = \lvert \partial P \cap N \rvert$
\For {$Q = \text{mut}_{(\omega,F)}(P)$,}
\If{BoundaryPoints $> \lvert \partial Q \cap N \rvert$}
\State  $P \gets Q$
\State $\text{BoundaryPoints} \gets \lvert \partial Q \cap N \rvert$
\State \textbf{go to} 3.
\EndIf
\EndFor
\State \textbf{Output:} $P$
\end{algorithmic}
\end{algorithm}

This algorithm will always terminate as the number of lattice boundary points of a polygon is finite and non-negative. The minimal representative of a mutation-equivalence class is not necessarily unique. In the search for Fano polygons we always look for minimal representatives of each mutation-equivalence class.
\bigskip

\begin{ex}
In Example \ref{2.4}, it is routine to check that $P$ is minimal.
\end{ex}

\section{Algorithm to calculate Minimal Polygons with fixed basket of singularities $\mathcal{B}$.} \label{Section 4}

\subsection{Special Facets}

We require the notion of a special facet introduced by {\O}bro \cite{AnAlgorithmfortheClassificationofSmoothFanoPolytopes}.
\bigskip

\begin{dfn}
Let $P \subset N_{\mathbb{R}}$ be a Fano polygon. We say that an edge $E$ of $P$ is a \emph{special facet} if
\[ \sum\limits_{\text{vertices } v \in P} v \in \mathbb{R}_{\geq 0}E, \]
\end{dfn}

Every Fano polygon has at least one special facet since $\mathbf{0} \in P^{\circ}$. Crucially we use a result from \cite{OntheCombinatorialClassificationofToriclogdelPezzoSurfaces} which is derived from a proof in \cite{ABoundednessResultforToricLogdelPezzosurfaces}.
\bigskip

\begin{lem} \label{4.2}
Let $P$ be a Fano polygon. Let $F$ be a special facet of $P$ of height $h$ and with inward pointing normal $n_{F} \in M$. Then 
\[ P \subset \{ (a,b) \in N_{\mathbb{R}} : -h(h+1) \leq b \leq h \}. \]
\end{lem}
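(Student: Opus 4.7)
The plan is to use the $GL(N)$-action to arrange coordinates so that $F$ lies on the horizontal line $\{b = h\}$, with endpoints $p = (p_{1}, h)$ and $q = (q_{1}, h)$ and inward pointing normal $n_{F} = (0, -1) \in M$. The upper bound $b \le h$ is then immediate: $P$ lies on the inward side of $F$, so $\langle v, n_{F} \rangle \ge -h$ for all $v \in P$, equivalently $b \le h$. The substance of the proof is the lower bound $b \ge -h(h+1)$.

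The first ingredient is a scalar consequence of the special facet hypothesis $\sum_{v \in \mathcal{V}(P)} v \in \mathbb{R}_{\ge 0} F$. Since $F$ lies at positive height $h$, the cone $\mathbb{R}_{\ge 0} F$ is contained in $\{b \ge 0\}$, so pairing with $(0, 1) \in M$ yields $\sum_{v} b_{v} \ge 0$. Let $v_{*} = (a_{*}, b_{*})$ be a vertex of $P$ achieving the minimum second coordinate $b_{*}$ (and assume $b_{*} \le 0$, else the conclusion is trivial). Separating off the contributions of $p$, $q$, and $v_{*}$ rewrites this as
\[
\sum_{v \in \mathcal{V}(P) \setminus \{p, q, v_{*}\}} b_{v} \;\ge\; -2h - b_{*}.
\]

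The second ingredient is that at most two vertices of the convex polygon $P$ can share any horizontal line, since three collinear vertices would violate extremality. The remaining vertices therefore occupy the integer levels $\{b = c\}$ with $c < h$, with at most two per level. Maximising the left hand side above by greedily placing two vertices at each of the highest levels $h - 1, h - 2, \ldots, h - j$ gives
\[
\sum_{v \neq p, q, v_{*}} b_{v} \;\le\; 2 \sum_{c = h - j}^{h - 1} c \;=\; j(2h - j - 1).
\]
An elementary optimisation of the quadratic $j(2h - j - 1)$ over integer $j$ shows that its maximum is $h(h - 1)$, attained at $j = h - 1$ and $j = h$. Combining with the previous inequality yields $h(h - 1) \ge -2h - b_{*}$, i.e.\ $b_{*} \ge -h(h + 1)$, which is the desired bound.

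The main point of care---and the closest thing to an obstacle---is checking that the maximising configuration is always admissible in the regime $b_{*} \le 0$ where the bound is non-trivial: the lowest level used, $h - j$, must remain above $v_{*}$'s level so as not to double-count, i.e.\ $h - j \ge b_{*} + 1$, which holds for every $j \le h$ once $b_{*} \le 0$. Beyond this feasibility check, the proof is simply the combination of the scalar form of the special facet hypothesis with the ``at most two vertices per horizontal line'' observation from 2D convexity, followed by the elementary quadratic optimisation.
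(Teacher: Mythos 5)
Your proof is correct. Note that the paper itself does not prove Lemma \ref{4.2} at all --- it imports the statement from Kasprzyk--Kreuzer--Nill, which in turn traces back to Dais--Nill --- so you have supplied an argument where the text gives only a citation; what you wrote is essentially the standard ``special facet'' counting argument in the style of {\O}bro. The three ingredients are all sound: the scalar inequality $\sum_{v} b_{v} \geq 0$ from $\sum_{v} v \in \mathbb{R}_{\geq 0}F \subset \{ b \geq 0 \}$; the fact that a horizontal line meets the vertex set of a convex polygon in at most two points (and that the only vertices at level $h$ are the two endpoints of $F$, since $P \cap \{b=h\} = F$); and the elementary optimisation. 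Two small remarks. First, the quadratic optimisation can be bypassed: only the levels $1 \leq c \leq h-1$ contribute positively, each contributing at most $2c$, so $\sum_{v \neq p,q,v_{*}} b_{v} \leq 2\sum_{c=1}^{h-1} c = h(h-1)$ immediately. Second, your closing ``feasibility check'' is unnecessary (and is the one place where the write-up wobbles, at $b_{*}=0$): you are bounding the actual sum from above by relaxing constraints, so it does not matter whether the extremal configuration is realisable; moreover $b_{*} \leq -1$ always holds, since the origin lies in the strict interior of $P$, so the case $b_{*}=0$ never occurs. Neither point affects the validity of the proof.
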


Given any Fano polygon $P$, we can perform a $GL(N)$-transformation to orientate $P$ such that a special facet $F$ is horizontal, of height $h$ and minimal with respect to the linear function $(0,-1)\in M$ on $P$. By Lemma \ref{4.2} $P$ is bounded below by the line $L= \{v \in N_{\mathbb{Q}} : \langle (0,-1) , v \rangle = h(h+1) \}$.
\bigskip

\begin{ex}
Considering $P = \text{conv}\Big\{ (0,1),(1,0),(-5,-1) \Big\}$, we calculate that :
\[ \sum\limits_{\text{vertices } v \in P} v  = (0,1) + (1,0) + (-5, -1) = (-4,0). \]
So $P$ has a unique special facet given by $F=\text{conv}\Big\{ (0,1),(-5,-1) \Big\}$.
\end{ex}

\subsection{Description of Algorithm}

Define the maximal local index of a Fano polygon $P$ by 
\[ m_{P} = \max \{ h_{E}: E \in \mathcal{F}(P)\}. \]
Similarly define $m_{\mathcal{B}}$ to be the maximum height amongst the cones representing the R-singularities of $P$.

The classification of Fano polygons with a given basket of singularities $\mathcal{B}$ up to mutation-equivalence is split into two cases:

\begin{itemize}
\item Case (i): $m_{P}=m_{B}$
\item Case (ii): $m_{P}>m_{B}$
\end{itemize}

The proof of Theorem 6.3 in \cite{MinimalityandMutationEquivalenceofPolygons} tackles case (ii). Note the polytopes this algorithm outputs are not necessarily minimal. The main result of this paper is an efficient algorithm to deal with case (i). An algorithm to compute the classification for case (i) has been completed in \cite{OntheCombinatorialClassificationofToriclogdelPezzoSurfaces}. However tackling classifications beyond the case of polygons with only $\frac{1}{3}(1,1)$ R-singularities is inefficient. The basic idea is to start with only a single edge, that will eventually be a special facet, and to inductively construct minimal polygons by adding in edges that will contribute T-singularities or R-singularities contained in $\mathcal{B}$. The full algorithm is described below.

\begin{algorithm}
\caption{Classification of Minimal Fano Polygons with given basket of singularities with $m_{P}=m_{\mathcal{B}}$}\label{euclid}
\begin{algorithmic}[1]
\State \textbf{Input:} Special facet $F = \text{conv}\big( (a,l_{F}), (b,l_{F}) \big)$, Basket of singularities $\mathcal{B}$
\State $L1 = \overline{(a,l_{F}), (-\mathcal{B},0)}$
\State $L2 = \overline{(b,l_{F}), (\mathcal{B},0)}$
\State $L = \{ v \in N: \langle v , n_{F} \rangle = l_{F}(l_{F}+1) \}$
\State $T =$ region bounded by $F,L,L1$ and $L2$
\State PossiblePoints $= \{ \text{primitive points } v \in N \text{ contained in } T \}$
\State ActiveConstructions $= \{F\}$
\State CompleteConstructions $= \varnothing$ 
\For {$P \in$ ActiveConstructions}
\For {$v \in$ PossiblePoints}
\If {$v \neq (a,l_{F})$ and the edge $E$ made from adding $v$ to $\mathcal{V}(P)$ satisfies convexity and $m_{E} \leq m_{\mathcal{B}}$}
\State  $\text{ActiveConstructions} \gets \big(\text{ActiveConstructions} \backslash \{P\}\big) \cup \{P \cup E\}$
\EndIf
\If {$v =(a,l_{F})$ and the edge $E$ made from adding $v$ to $\mathcal{V}(P)$ satisfies convexity and $m_{E} \leq m_{\mathcal{B}}$}
\State ActiveConstructions $\gets \text{ActiveConstructions} \backslash \{P\}$
\State CompleteConstructions $\gets \text{CompleteConstructions} \cup \{P  \cup E\}$
\EndIf
\EndFor
\EndFor
\If {ActiveConstructions $\neq \varnothing$}
\State \textbf{go to} 9
\EndIf
\For {$P \in$ CompleteConstructions}
\If {$P$ not minimal or $F$ not a special facet of $P$ or $\{$R-singularities of $P\}\neq \mathcal{B}$}
\State CompleteConstructions $\gets$	CompleteConstructions $\backslash \{P\}$
\EndIf
\EndFor
\State \textbf{Output:} CompleteConstructions
\end{algorithmic}
\end{algorithm} 

There are a few further points to clarify in order to complete a proof showing we obtain a complete classification from the algorithm for a given $\mathcal{B}$. Firstly we prove that there are only finitely many choices of special facet as an input. We run the algorithm for all such possible choices. We check our output up to $GL(N)$-equivalence and mutation-equivalence. Two polygons can be shown to be mutation-equivalent by explicitly calculating a sequence of mutations between the two. Conversely we know that the classical period of the maximally mutable Laurent polynomial corresponding to a polygon is invariant under mutation. Hence given two polytopes with different periods we know there cannot exist a sequence of mutation between the two.

Since $m_{P} = m_{\mathcal{B}}$ we have that the height of the special facet $l_{f} \in \{ 1,\cdots ,m_{\mathcal{B}} \}$. We can translate the top edge $F$ horizontally by $l_{f}$ using a $GL(N)$-transformation, so assume that $-l_{f}< a \leq 0$. It remains to show that for a fixed $l_{f}$ and $a$ that there are only finitely many choices for $b$. Suppose $b \geq a + l_{F}$. Then by minimality the region $T$ contains a point $(x,y)$ with $y \leq -l_{F}$. It is easy to see that if $b$ gets too big then the intersection of $L1$ and $L2$ will bound $T$ so as not to include such a point. Therefore there are only finitely many choices of special facet. Note we only consider $b$ such that $(b,l_{F})$ is primitive and the singularity contributed by $F$ is either a T-singularity or an R-singularity contained in $\mathcal{B}$.

We have successfully written computer code in Sage that efficiently implements the algorithm.

\section{Minimal Fano Polygons with $\mathcal{B} = \{ m_{1} \times \frac{1}{3}(1,1) , m_{2} \times \frac{1}{6}(1,1) \} $. } \label{Section 5}

We apply algorithm 2 to classify all Fano polygons whose only R-singularities are the cyclic quotient singularities $\frac{1}{3}(1,1)$ and $\frac{1}{6}(1,1)$. Set $\mathcal{B}= \{ m_{1} \times \frac{1}{3}(1,1) , m_{2} \times \frac{1}{6}(1,1) \}$ where $m_{1} \in \mathbb{Z}_{\geq 0}$ and $m_{2} \in \mathbb{Z}^{>0}$. $m_{2}$ is non-zero since a classification for Fano polygons with only $\frac{1}{3}(1,1)$ R-singularities has been completed in \cite{MinimalityandMutationEquivalenceofPolygons}. Before we can apply our algorithm, we must find upper bounds on $n, m_{1}$ and $m_{2}$ to prove that we are only required to run the algorithm a finite number of times to complete the classification.

In the $\frac{1}{3}(1,1)$ classifiation of \cite{MinimalityandMutationEquivalenceofPolygons}, we are able to find a bound on the number of R-singularities by substituting the degree contribution $A_{\frac{1}{3}(1,1)}$ into an expression for the anticanonical degree of the corresponding toric variety. However the degree contribution $A_{\frac{1}{6}(1,1)}$ is negative and a similar method does not yield such a bound. We appeal to a combinatorial argument instead.

\begin{lem} \label{5.1}
There exists no minimal Fano polygons $P \subset N_{\mathbb{R}}$, with $m_{P}=3$ and residual basket given by $\mathcal{B}= \{ m\times\frac{1}{6}(1,1) \}$, where $m \geq 3$.
\end{lem}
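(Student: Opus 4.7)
The plan is to argue by contradiction: suppose such a minimal Fano polygon $P$ exists. The $\frac{1}{6}(1,1)$ R-cone has lattice height $3$ and lattice length $2$, so each of the $m$ edges of $P$ carrying such an R-cone has height exactly $3$ and lattice length of the form $3n+2$ with $n\geq 0$; every remaining edge of $P$ has height at most $3$ and contributes only T-singularities (lengths divisible by the height). With $m\geq 3$ we have at least three distinct edges of height exactly $3$.

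First, I would choose a special facet $F\subset P$ (which exists since $\mathbf{0}\in P^{\circ}$) and apply a $GL(2,\mathbb{Z})$-transformation to make $F$ horizontal at height $l_F\in\{1,2,3\}$ with inward normal $(0,-1)$. Lemma~\ref{4.2} then confines $P$ to the strip $\{(x,y):-l_F(l_F+1)\leq y\leq l_F\}$, bounding the vertical extent of $P$ by at most $15$ (attained when $l_F=3$).

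Next, I would classify the primitive inward normals $\omega=(a,b)\in M$ of a height-$3$ $\frac{1}{6}(1,1)$ edge passing through a prescribed primitive vertex $v=(v_1,v_2)\in P$: these are the primitive integer solutions of $av_1+bv_2=-3$. Combined with the strip bound and primitivity of the other endpoint, the admissible height-$3$ R-cone edges emanating from any given vertex form a short list. For example, when $F=\mathrm{conv}\{(-1,3),(1,3)\}$ is itself a minimum-length $\frac{1}{6}(1,1)$ R-cone edge, the admissible normals at $(1,3)$ satisfy $a+3b=-3$, and enumerating primitive solutions whose associated edge stays inside the strip forces the other endpoint of any height-$3$ R-cone edge leaving $(1,3)$ to be $(-1,-9)$.

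Finally, I would case-split on $l_F\in\{1,2,3\}$ and on whether $F$ carries a $\frac{1}{6}(1,1)$ R-cone or only T-cones, iterating the local enumeration at each subsequent vertex. The goal is to show that in every case three height-$3$ $\frac{1}{6}(1,1)$ edges cannot be combined into a convex polygon satisfying simultaneously the constraints that every vertex is primitive, that $F$ remains a special facet, and that every edge has height at most $3$. Whenever a candidate configuration survives these constraints, I would finish by exhibiting an explicit mutation—either along a long T-edge opposite $F$ or along a height-$3$ R-cone edge of length $\geq 3$—that strictly decreases $|\partial P\cap N|$, contradicting minimality. The hard part will be the combinatorial bookkeeping of the case analysis: each individual subcase reduces to a small Diophantine problem of the form $av_1+bv_2=\pm 3$ with $\gcd(a,b)=1$ whose solutions inside the finite strip are easily enumerated, but organising the enumeration so that no choice of intermediate T-singularity edges rescues any configuration with three or more $\frac{1}{6}(1,1)$ R-cone edges is where most of the work lies.
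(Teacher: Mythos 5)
Your strategy---normalise a special facet, confine $P$ to the strip of Lemma~\ref{4.2}, and exhaustively enumerate height-$3$ edges via the Diophantine condition $av_1+bv_2=-3$---is sound in principle (the strip contains only finitely many primitive points, so the search is finite), and your one concrete computation checks out: apart from horizontal extensions of the top edge, the only length-$2$, height-$3$ edge leaving $(1,3)$ that stays in the strip and has a primitive far endpoint does end at $(-1,-9)$. But the proposal has a genuine gap: the decisive step is entirely deferred. You never carry out the case split over $l_F\in\{1,2,3\}$, over whether $F$ carries an R-cone, or---most importantly---over the intermediate T-cone edges separating the three R-cone edges; you state yourself that this is ``where most of the work lies.'' Your fallback, that any surviving configuration can be killed by exhibiting a boundary-point-decreasing mutation, is not an argument: if a configuration survived all your constraints and were minimal, the lemma would simply be false, so ruling every configuration out \emph{is} the proof, and as written it has not been done.

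The paper avoids this bookkeeping with a normalisation trick you could adopt. It reduces to $m=3$, places one R-cone edge at $E_1=\text{conv}\{(-1,3),(1,3)\}$, and then mutates away the T-cones separating $E_1$ from a second R-cone edge, so that $E_2$ shares the vertex $(1,3)$ with $E_1$ (taking care that subsequent mutations do not separate them again). Writing $(a,b)$ for the far vertex of $E_2$, the height-$3$ condition becomes $b=3a-3\gcd(b-3,1-a)$; together with convexity ($b<3$) and primitivity this forces $a<0$. A short convexity argument then shows that the third R-cone edge $E_3$ is either vertical, hence of height $1$, or has height strictly greater than $3$, contradicting $m_P=3$ either way. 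This collapses your entire enumeration into one Diophantine identity and a two-case check. If you prefer to keep your enumeration-based route, you must actually execute it---organised essentially as in the algorithm of Section~\ref{Section 4}---rather than describe it.
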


\begin{proof}
If we prove the case $m=3$ then the result follows for $m>3$.

 Let $P$ be a polygon with $\mathcal{B}= \{ 3\times\frac{1}{6}(1,1) \}$.  By a $GL(N)$-translation we assume that one of the R-singularities is given by $E_{1} = \text{conv} \{ (-1,3), (1,3) \}$. By mutating with respect to any separating T-singularity we assume a second R-singularity is adjacent to $E_{1}$ with one endpoint $(1,3)$, given by an edge $E_{2}$. We aim to mutate with respect to any T-singularities separating $E_{1}$  and the final edge representing an R-singularity, which we denote $E_{3}$, the subtlety being that when we perform such mutations we do not to disrupt $E_{1}$ and $E_{2}$ lying adjacently.

We formalise this construction. Denote the unknown vertex of $E_{2}$ by $(a,b)$. $E_{2}$ must be at height 3. The primitive inner pointing normal of $E_{2}$ is given by 
 \[ n = \Big( \frac{b-3}{g}, \frac{1-a}{g} \Big) \in M \]
 where $g=\text{gcd}(b-3,1-a)$. The height of $E_{2}$ is
 \[ h = -n \cdot (1,3) = \frac{3a-b}{g} . \]
 Setting $h=3$ in the expression as required:
 \[ \frac{3a-b}{g} = 3, \]
 \[ b = 3a -3 \text{ gcd}(b-3,1-a). \]
 
By convexity we require $b<3$. The only remaining integer solution with $a \geq 0$, is given by $(0,-3)$. However this point is not primitive so we can discard it. Hence $a<0$. 

Suppose $E_{3}$ is a vertical edge. By convexity we require $a=-1$ and that $(a,b)$ is a vertex of $E_{3}$. But then $E_{3}$ is of height 1 and $m<3$. Suppose $E_{3}$ is not vertical. Again convexity demands that the second endpoint of $E_{3}$ has first coordinate less than $-1$. Then $\text{height}(E_{3}) > 3$ and we contradict $m_{P} =3$.

Therefore there can be no minimal Fano polygon with residual basket given by $ \mathcal{B}= \{ 3\times\frac{1}{6}(1,1) \}$ with $m_{P} = 3$.
 
\end{proof}

A similar argument shows that if we have a basket $\mathcal{B} = \{ m_{1} \times \frac{1}{3}(1,1) , m_{2} \times \frac{1}{6}(1,1) \}$ as above, then $m_{1} + m_{2} < 3$. Hence we can run the algorithm a finite number of times to get the desired classification.

Examples in this particular classification demonstrate a notion known as shattering introduced by Wormleighton \cite{ReconstructionofOrbifolddelPezzoSurfacesfromHilbertSeries}. Let $C_{1} = \langle u,v \rangle$, $C_{2} = \langle v,w \rangle$ be two cones in $N_{\mathbb{R}}$. Suppose the vectors $v-u$, $w-v$ are parallel. Then we define the hyperplane sum of $C_{1}$ and $C_{2}$ to be given by $C_{1} \ast C_{2} = \langle u,w \rangle$.

\begin{cor}[\cite{ReconstructionofOrbifolddelPezzoSurfacesfromHilbertSeries} Corollary 2.2] \label{5.2}
Let $\sigma_{1} \ast \sigma_{2} \ast \cdots \ast \sigma_{n} = \tau$ be a T-singularity. Then the Riemann-Roch contributions $Q_{\sigma_{i}}$ and the degree contributions $A_{\sigma_{i}}$ satisfy
\[ Q_{\sigma_{1}} + \cdots + Q_{\sigma_{n}} = 0, \]
\[ A_{\sigma_{1}} + \cdots + A_{\sigma_{n}} = A_{\tau} = d = \frac{\text{lattice length}(\tau)}{\text{lattice height}(\tau)}. \]
\end{cor}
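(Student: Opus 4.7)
The plan is to establish the corollary in two phases: first verify an additivity property of the invariants $A$ and $Q$ under the hyperplane sum operation, then evaluate these invariants explicitly on a T-singularity.

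Phase one reduces by induction on $n$ to the case $n=2$: if $\sigma_1 \ast \sigma_2 = \sigma$, one wants $A_{\sigma_1} + A_{\sigma_2} = A_\sigma$ and $Q_{\sigma_1} + Q_{\sigma_2} = Q_\sigma$. Geometrically, passing from $\sigma$ to the pair $(\sigma_1,\sigma_2)$ amounts to inserting a single primitive lattice ray through the new shared boundary vertex, which refines the associated toric fan. The invariants $A_\sigma$ and $Q_\sigma$, computed from the Hirzebruch--Jung continued fraction as in Proposition \ref{2.13}, admit reinterpretations as the local contributions of $\sigma$ to the anticanonical self-intersection and anticanonical Hilbert series of any toric surface containing $\sigma$ as a cone. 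Inserting a ray modifies these global invariants in a controllable way, and tracking the change explicitly through the continued-fraction data yields the additive identities.

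Phase two is the direct evaluation for $\tau = \frac{1}{nr^2}(1,nrc-1)$, whose defining cone has lattice length $nr$ and lattice height $r$, so that $d = nr/r = n$. Expanding $nr^2/(nrc-1)$ as a Hirzebruch--Jung continued fraction and substituting into the formulas of Proposition \ref{2.13} gives $A_\tau = n = d$ and $Q_\tau = 0$. The latter vanishing mirrors the fact that T-singularities admit qG-smoothings and hence contribute trivially to the orbifold Riemann--Roch formula; concretely, the Dedekind sums defining $Q_\tau$ cancel by virtue of the arithmetic structure imposed by $r \mid k$.

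Combining the two phases produces $\sum A_{\sigma_i} = A_\tau = d$ and $\sum Q_{\sigma_i} = Q_\tau = 0$, as required. The main obstacle is phase one: the Hirzebruch--Jung continued fraction of $\sigma_1 \ast \sigma_2$ is not a simple concatenation of those of $\sigma_1$ and $\sigma_2$, since the two fractions interact through the lattice geometry at the shared boundary vertex. Keeping track of the discrepancies $d_i$ and the self-intersection numbers $a_i$ through this refinement, and verifying that the net effect on both $A$ and $Q$ is precisely additive, is the principal technical step; an alternative would be to derive additivity directly from the fact that both $A_\sigma$ and $Q_\sigma$ can be expressed as the difference between a local Riemann--Roch style contribution of the cone and its smooth counterpart, a difference that is manifestly local along the edge.
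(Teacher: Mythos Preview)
The paper does not supply a proof of this corollary: it is quoted verbatim from Wormleighton \cite{ReconstructionofOrbifolddelPezzoSurfacesfromHilbertSeries} and then immediately applied to the example of a height-$3$ T-cone split into a $\tfrac{1}{3}(1,1)$ and a $\tfrac{1}{6}(1,1)$. There is therefore nothing in the present paper to compare your argument against.

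On its own merits, your two-phase outline is the natural strategy and is sound in principle. Phase two is unproblematic: for a T-singularity $\tau=\tfrac{1}{nr^{2}}(1,nrc-1)$ one has lattice length $nr$, lattice height $r$, hence $d=n$, and the identities $A_{\tau}=n$ and $Q_{\tau}=0$ are exactly what make Proposition~\ref{2.13} consistent whether one records $\tau$ as a single cone or as $n$ primitive T-cones. Phase one is where the content lies, and you correctly flag it as the obstacle. Your second suggested route there---deriving additivity from the fact that $A_\sigma$ and $Q_\sigma$ are the local contributions of the cone $\sigma$ to the global invariants $(-K_X)^2$ and $\mathrm{Hilb}(X,-K_X)$ of any complete toric surface containing $\sigma$---is the cleaner one and avoids the continued-fraction bookkeeping entirely: since inserting the ray through the shared vertex $v$ does not change the polygon $P$ (the point $v$ already lies on $\partial P$), the global degree and Hilbert series are unchanged, and equating the two local decompositions forces $A_{\sigma_1}+A_{\sigma_2}=A_{\sigma_1\ast\sigma_2}$ and likewise for $Q$. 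Your first suggested route, tracking the Hirzebruch--Jung data through the subdivision, would also work but is needlessly laborious by comparison. As written, though, your proposal is an outline rather than a proof: the additivity step is asserted and motivated but not actually carried out.
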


Informally this notion can be observed by considering a T-cone at height 3. Let $C = \text{cone}\big\{ (-2,3), (1,3) \big\}$. By adding an additional ray given by primitive generating vector $(-1,3)$ we decompose $C$ into two sub-cones $C_{1}$ and $C_{2}$ representing a $\frac{1}{3}(1,1)$ and a $\frac{1}{6}$(1,1) R-singularity respectively. By Corollary \ref{5.2}
\[ Q_{\frac{1}{3}(1,1)} + Q_{\frac{1}{6}(1,1)} = 0 \]
\[ A_{\frac{1}{3}(1,1)} + A_{\frac{1}{6}(1,1)} = 1. \]
Knowing $A_{\frac{1}{3}(1,1)} = \frac{5}{3}$ and $Q_{\frac{1}{3}(1,1)} = -\frac{t}{3(1-t^{3})}$, we easily derive
\[ A_{\frac{1}{6}(1,1)} = -\frac{2}{3} \]
\[ Q_{\frac{1}{6}(1,1)} = \frac{t}{3(1-t^{3})}. \]

By Proposition \ref{2.13}, Lemma \ref{5.1} and the calculated value of $A_{\frac{1}{6}(1,1)}$, we calculate $(-K_{X_{P}})^{2} = 12 -n - \frac{5}{3}m_{1} + \frac{2}{3}m_{2}$. Since we are interested in Fano polygons, $(-K_{X_{P}})^{2} > 0$, so $n\leq 13$. We only run the algorithm a finite number of times to get the desired classification.

We give the table of results for the classification of polygons with singularity content of the form $\Big(n, \{ m_{1} \times \frac{1}{3}(1,1), m_{2} \times \frac{1}{6}(1,1) \} \Big)$ with $m_{2} \neq 0$, up to mutation-equivalence. All the polytopes listed arose in case (i). Any found in case (ii) are mutation equivalent to one of the polytopes below.

\begin{center}
\def\arraystretch{1.25}
\begin{tabular}{| c | c | c | c | c | c |}
\hline
\# & Vertices of Polygon $P$ & $n$ & $m_{1}$ & $m_{2}$ & $(-K_{X})^{2}$ \\
\hline
\hline
1.1 & (-1,3), (1,3), (0,-1) & 2 & 0 & 1 & $\frac{32}{3}$ \\
\hline
1.2 & (-1,3), (1,3), (1,2), (0,-1) & 3 & 0 & 1 & $\frac{29}{3}$ \\
\hline
1.3 & (-1,3), (1,3), (1,1), (0,-1) & 4 & 0 & 1 & $\frac{26}{3}$ \\
\hline
1.4 & (-1,3), (1,3), (1,0), (0,-1) & 5 & 0 & 1 & $\frac{23}{3}$ \\
\hline
1.5 & (-1,3), (1,3), (1,2), (0,-1), (-1,0) & 6 & 0 & 1 & $\frac{20}{3}$ \\
\hline
1.6 & (-1,3), (1,3), (1,2), (0,-1), (-1,-1) & 7 & 0 & 1 & $\frac{17}{3}$ \\
\hline
1.7 & (-1,3), (1,3), (1,0), (0,-1), (-1,0) & 8 & 0 & 1 & $\frac{14}{3}$ \\
\hline
1.8 & (-1,3), (1,3), (1,0), (-1,-1) & 8 & 0 & 1 & $\frac{14}{3}$ \\
\hline
1.9 & (-1,3), (1,3), (1,0), (0,-1), (-1,-1) & 9 & 0 & 1 & $\frac{11}{3}$ \\
\hline
1.10 & (-1,3), (1,3), (1,2), (-1,-4) & 10 & 0 & 1 & $\frac{8}{3}$ \\
\hline
1.11 & (-1,3), (1,3), (1,-1), (-1,-3) & 11 & 0 & 1 & $\frac{5}{3}$ \\
\hline
1.12 & (-1,3), (1,3), (5,-1), (-5,-1) & 12 & 0 & 1 & $\frac{2}{3}$ \\
\hline
1.13 & (-1,1), (1,1), (5,-1), (-5,-1) & 12 & 0 & 2 & $\frac{4}{3}$ \\
\hline
1.14 & (-1,3), (1,3), (1,-1), (-1,-2) & 9 & 1 & 1 & $2$ \\
\hline
\end{tabular}
\end{center}

These polygons are illustrated in figure 1.

Recall that the maximally mutable Laurent polynomial of a polygon $P$ is a polynomial $f$ such that $\text{Newt}(f) = P$, and that the mutations of $f$ remain Laurent polynomials. We show polygons 1.7 and 1.8 are not mutation-equivalent by calculation of the periods of the associated maximally mutable Laurent polynomials. These periods are mutation invariants by \cite[Lemma 1]{MinkowskiPolynomialsandMutations}.The maximally mutable Laurent polynomials of 1.7 and 1.8 are given respectively by \[f = x y^{3} + 3xy^{2} + a y^{3} + 3xy + by^{2} + x^{-1}y^{3} + x + cy + 3x^{-1} y^{2} + 3x^{-1}y + y^{-1} + x^{-1}, \]  \[g = x y^{3} + 3xy^{2} + d y^{3} + 3xy + ey^{2} + x^{-1}y^{3} + x + fy + 4x^{-1} y^{2} + 6x^{-1}y + 4x^{-1} + x^{-1}y^{-1}. \] Calculating the corresponding periods of $f$ and $g$ we obtain:
\[ \pi_{f} = 1 + (2a +2)x^{2} + (3b + 36)x^{3} + (6a^{2} + 24a + 4c + 186)x^{4} \]  \[ + (20ab + 360a + 60b + 760)x^{5} + \cdots,  \]
\[ \pi_{g} = 1 + 14x^{2} + 6ax^{3} + 546x^{4} + (420a + 30b)x^{5} + \cdots . \]
It is easy to see that this periods are not equal and hence the polygons cannot be mutation-equivalent.

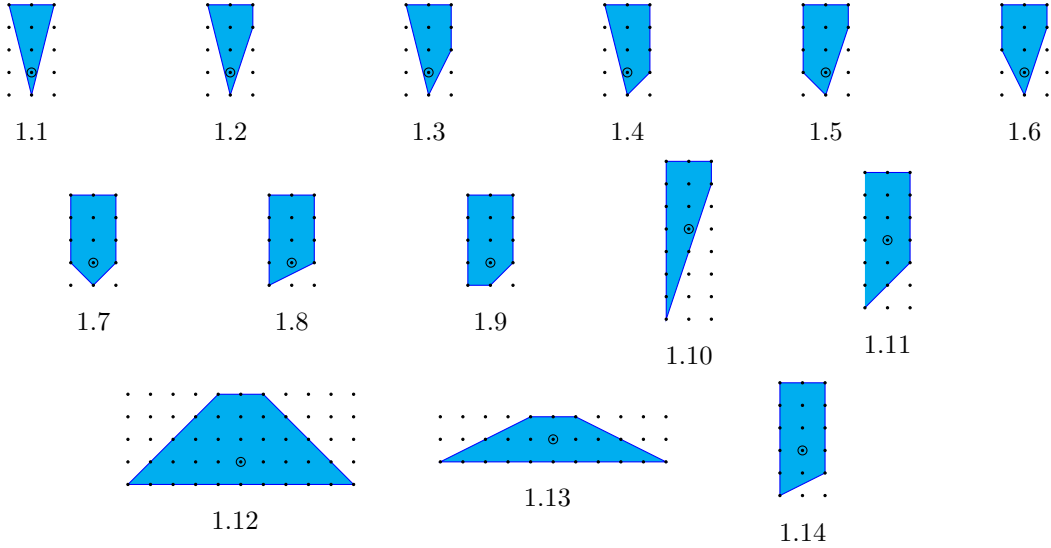
\begin{figure}
\centering
\begin{subfigure}{0.1\textwidth}
\centering
\begin{tikzpicture}[scale=0.3, transform shape]
\begin{scope}
\clip (-1.3,-1.3) rectangle (1.3cm,3.3cm); 
\filldraw[fill=cyan, draw=blue] (-1,3) -- (1,3) -- (0,-1) -- (-1,3); 
\foreach \x in {-7,-6,...,7}{                           
    \foreach \y in {-7,-6,...,7}{                       
    \node[draw,shape = circle,inner sep=1pt,fill] at (\x,\y) {}; 
    }
}
 \node[draw,shape = circle,inner sep=4pt] at (0,0) {}; 
\end{scope}
\end{tikzpicture}
\caption*{1.1}
\end{subfigure}\hspace{1cm}
\begin{subfigure}{0.1\textwidth}
\centering
\begin{tikzpicture}[scale=0.3, transform shape]
\begin{scope}
\clip (-1.3,-1.3) rectangle (1.3cm,3.3cm); 
\filldraw[fill=cyan, draw=blue] (-1,3) -- (1,3) -- (1,2) -- (0,-1) -- (-1,3); 
\foreach \x in {-7,-6,...,7}{                           
    \foreach \y in {-7,-6,...,7}{                       
    \node[draw,shape = circle,inner sep=1pt,fill] at (\x,\y) {}; 
    }
}
 \node[draw,shape = circle,inner sep=4pt] at (0,0) {}; 
\end{scope}
\end{tikzpicture}
\caption*{1.2}
\end{subfigure}\hspace{1cm}
\begin{subfigure}{0.1\textwidth}
\centering
\begin{tikzpicture}[scale=0.3, transform shape]
\begin{scope}
\clip (-1.3,-1.3) rectangle (1.3cm,3.3cm); 
\filldraw[fill=cyan, draw=blue] (-1,3) -- (1,3) -- (1,1) -- (0,-1) -- (-1,3); 
\foreach \x in {-7,-6,...,7}{                           
    \foreach \y in {-7,-6,...,7}{                       
    \node[draw,shape = circle,inner sep=1pt,fill] at (\x,\y) {}; 
    }
}
 \node[draw,shape = circle,inner sep=4pt] at (0,0) {}; 
\end{scope}
\end{tikzpicture}
\caption*{1.3}
\end{subfigure}\hspace{1cm}
\begin{subfigure}{0.1\textwidth}
\centering
\begin{tikzpicture}[scale=0.3, transform shape]
\begin{scope}
\clip (-1.3,-1.3) rectangle (1.3cm,3.3cm); 
\filldraw[fill=cyan, draw=blue] (-1,3) -- (1,3) -- (1,0) -- (0,-1) -- (-1,3); 
\foreach \x in {-7,-6,...,7}{                           
    \foreach \y in {-7,-6,...,7}{                       
    \node[draw,shape = circle,inner sep=1pt,fill] at (\x,\y) {}; 
    }
}
 \node[draw,shape = circle,inner sep=4pt] at (0,0) {}; 
\end{scope}
\end{tikzpicture}
\caption*{1.4}
\end{subfigure}\hspace{1cm}
\begin{subfigure}{0.1\textwidth}
\centering
\begin{tikzpicture}[scale=0.3, transform shape]
\begin{scope}
\clip (-1.3,-1.3) rectangle (1.3cm,3.3cm); 
\filldraw[fill=cyan, draw=blue] (-1,3) -- (1,3) -- (1,2) -- (0,-1) -- (-1,0) -- (-1,3); 
\foreach \x in {-7,-6,...,7}{                           
    \foreach \y in {-7,-6,...,7}{                       
    \node[draw,shape = circle,inner sep=1pt,fill] at (\x,\y) {}; 
    }
}
 \node[draw,shape = circle,inner sep=4pt] at (0,0) {}; 
\end{scope}
\end{tikzpicture}
\caption*{1.5}
\end{subfigure}\hspace{1cm}
\begin{subfigure}{0.1\textwidth}
\centering
\begin{tikzpicture}[scale=0.3, transform shape]
\begin{scope}
\clip (-1.3,-1.3) rectangle (1.3cm,3.3cm); 
\filldraw[fill=cyan, draw=blue] (-1,3) -- (1,3) -- (1,2) -- (0,-1) -- (-1,1) -- (-1,3); 
\foreach \x in {-7,-6,...,7}{                           
    \foreach \y in {-7,-6,...,7}{                       
    \node[draw,shape = circle,inner sep=1pt,fill] at (\x,\y) {}; 
    }
}
 \node[draw,shape = circle,inner sep=4pt] at (0,0) {}; 
\end{scope}
\end{tikzpicture}
\caption*{1.6}
\end{subfigure}\hspace{1cm}

\begin{subfigure}{0.1\textwidth}
\centering
\begin{tikzpicture}[scale=0.3, transform shape]
\begin{scope}
\clip (-1.3,-1.3) rectangle (1.3cm,3.3cm); 
\filldraw[fill=cyan, draw=blue] (-1,3) -- (1,3) -- (1,0) -- (0,-1) -- (-1,0) -- (-1,3); 
\foreach \x in {-7,-6,...,7}{                           
    \foreach \y in {-7,-6,...,7}{                       
    \node[draw,shape = circle,inner sep=1pt,fill] at (\x,\y) {}; 
    }
}
 \node[draw,shape = circle,inner sep=4pt] at (0,0) {}; 
\end{scope}
\end{tikzpicture}
\caption*{1.7}
\end{subfigure}\hspace{1cm}
\begin{subfigure}{0.1\textwidth}
\centering
\begin{tikzpicture}[scale=0.3, transform shape]
\begin{scope}
\clip (-1.3,-1.3) rectangle (1.3cm,3.3cm); 
\filldraw[fill=cyan, draw=blue] (-1,3) -- (1,3) -- (1,0) -- (-1,-1) -- (-1,3); 
\foreach \x in {-7,-6,...,7}{                           
    \foreach \y in {-7,-6,...,7}{                       
    \node[draw,shape = circle,inner sep=1pt,fill] at (\x,\y) {}; 
    }
}
 \node[draw,shape = circle,inner sep=4pt] at (0,0) {}; 
\end{scope}
\end{tikzpicture}
\caption*{1.8}
\end{subfigure}\hspace{1cm}
\begin{subfigure}{0.1\textwidth}
\centering
\begin{tikzpicture}[scale=0.3, transform shape]
\begin{scope}
\clip (-1.3,-1.3) rectangle (1.3cm,3.3cm); 
\filldraw[fill=cyan, draw=blue] (-1,3) -- (1,3) -- (1,0) -- (0,-1) -- (-1,-1) -- (-1,3); 
\foreach \x in {-7,-6,...,7}{                           
    \foreach \y in {-7,-6,...,7}{                       
    \node[draw,shape = circle,inner sep=1pt,fill] at (\x,\y) {}; 
    }
}
 \node[draw,shape = circle,inner sep=4pt] at (0,0) {}; 
\end{scope}
\end{tikzpicture}
\caption*{1.9}
\end{subfigure}\hspace{1cm}
\begin{subfigure}{0.1\textwidth}
\centering
\begin{tikzpicture}[scale=0.3, transform shape]
\begin{scope}
\clip (-1.3,-4.3) rectangle (1.3cm,3.3cm); 
\filldraw[fill=cyan, draw=blue] (-1,3) -- (1,3) -- (1,2) -- (-1,-4) -- (-1,3); 
\foreach \x in {-7,-6,...,7}{                           
    \foreach \y in {-7,-6,...,7}{                       
    \node[draw,shape = circle,inner sep=1pt,fill] at (\x,\y) {}; 
    }
}
 \node[draw,shape = circle,inner sep=4pt] at (0,0) {}; 
\end{scope}
\end{tikzpicture}
\caption*{1.10}
\end{subfigure}\hspace{1cm}
\begin{subfigure}{0.1\textwidth}
\centering
\begin{tikzpicture}[scale=0.3, transform shape]
\begin{scope}
\clip (-1.3,-3.3) rectangle (1.3cm,3.3cm); 
\filldraw[fill=cyan, draw=blue] (-1,3) -- (1,3) -- (1,-1) -- (-1,-3); 
\foreach \x in {-7,-6,...,7}{                           
    \foreach \y in {-7,-6,...,7}{                       
    \node[draw,shape = circle,inner sep=1pt,fill] at (\x,\y) {}; 
    }
}
 \node[draw,shape = circle,inner sep=4pt] at (0,0) {}; 
\end{scope}
\end{tikzpicture}
\caption*{1.11}
\end{subfigure}\hspace{1cm}

\begin{subfigure}{0.2\textwidth}
\centering
\begin{tikzpicture}[scale=0.3, transform shape]
\begin{scope}
\clip (-5.3,-1.3) rectangle (5.3cm,3.3cm); 
\filldraw[fill=cyan, draw=blue] (-1,3) -- (1,3) -- (5,-1) -- (-5,-1) -- (-1,3); 
\foreach \x in {-7,-6,...,7}{                           
    \foreach \y in {-7,-6,...,7}{                       
    \node[draw,shape = circle,inner sep=1pt,fill] at (\x,\y) {}; 
    }
}
 \node[draw,shape = circle,inner sep=4pt] at (0,0) {}; 
\end{scope}
\end{tikzpicture}
\captionsetup{justification=centering}
\caption*{1.12}
\end{subfigure}\hspace{1cm}
\begin{subfigure}{0.2\textwidth}
\centering
\begin{tikzpicture}[scale=0.3, transform shape]
\begin{scope}
\clip (-5.3,-1.3) rectangle (5.3cm,1.3cm); 
\filldraw[fill=cyan, draw=blue] (-1,1) -- (1,1) -- (5,-1) -- (-5,-1) -- (-1,1); 
\foreach \x in {-7,-6,...,7}{                           
    \foreach \y in {-7,-6,...,7}{                       
    \node[draw,shape = circle,inner sep=1pt,fill] at (\x,\y) {}; 
    }
}
 \node[draw,shape = circle,inner sep=4pt] at (0,0) {}; 
\end{scope}
\end{tikzpicture}
\caption*{1.13}
\end{subfigure}\hspace{1cm}
\begin{subfigure}{0.1\textwidth}
\centering
\begin{tikzpicture}[scale=0.3, transform shape]
\begin{scope}
\clip (-1.3,-2.3) rectangle (1.3cm,3.3cm); 
\filldraw[fill=cyan, draw=blue] (-1,3) -- (1,3) -- (1,-1) -- (-1,-2) -- (-1,3); 
\foreach \x in {-7,-6,...,7}{                           
    \foreach \y in {-7,-6,...,7}{                       
    \node[draw,shape = circle,inner sep=1pt,fill] at (\x,\y) {}; 
    }
}
 \node[draw,shape = circle,inner sep=4pt] at (0,0) {}; 
\end{scope}
\end{tikzpicture}
\caption*{1.14}
\end{subfigure}\hspace{1cm}

\caption*{Figure 1: Minimal Representatives of Mutation-Equivalence Classes of Fano Polygons with Singularity Content $\Big(n, \{ m_{1} \times \frac{1}{3}(1,1), m_{2} \times \frac{1}{6}(1,1) \} \Big)$ where $m_{1} \geq 0, m_{2} > 0$.}
\end{figure}

\section{Minimal Fano Polygons with $ \mathcal{B} = \{ m \times \frac{1}{5}(1,1) \} $. } \label{Section 6}

We find all Fano polygons with singularity content of the form $\Big(n, \{ m \times \frac{1}{5}(1,1) \} \Big)$ with $m > 0$. Similarly to section \ref{Section 5}, we find bounds on $n$ and $m$ to ensure we find a complete classification.

\begin{lem}
There exist no minimal Fano polygons $P \subset N_{\mathbb{R}}$, with $m_{P} = 5$ and residual basket given by $\mathcal{B} = \{ m \times \frac{1}{5}(1,1)\}$, where $m \geq 3$.
\end{lem}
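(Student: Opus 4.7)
The plan is to follow the combinatorial template of Lemma~\ref{5.1} almost verbatim, adapted to the geometry of a $\frac{1}{5}(1,1)$ edge (height $5$, lattice length $1$, as $k = 1, r = 5$ in the decomposition). First, it suffices to prove the case $m=3$: if no minimal Fano polygon exists with exactly three R-edges of type $\frac{1}{5}(1,1)$ and $m_{P} = 5$, then certainly none exists with more, since the same three-edge obstruction applies.

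Next, fix three R-edges $E_{1}, E_{2}, E_{3}$ of $P$. By a $GL(N)$-transformation, place $E_{1}$ horizontally at height $5$ with endpoints $(p,5)$ and $(p+1,5)$; primitivity forces $p \not\equiv 0, -1 \pmod{5}$. Horizontal shears shift $p$ by multiples of $5$ and the reflection $(x,y) \mapsto (-x,y)$ sends $p \mapsto -p-1$, so up to $GL(N)$-equivalence only a small number of residues of $p \pmod{5}$ must be considered. By mutating across any T-cones separating $E_{1}$ from one of the remaining R-edges---chosen so as not to disturb the other R-edge---we may assume $E_{2}$ is adjacent to $E_{1}$ at the shared vertex $(p+1,5)$. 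Let $(x,y)$ denote the other endpoint of $E_{2}$. Mimicking the Diophantine computation in Lemma~\ref{5.1}, lattice length $1$ and height $5$ force $(p+1)y - 5x = 5$, so $x = -1 + (p+1)t$ and $y = 5t$ for some $t \in \mathbb{Z}$. Convexity at $(p+1,5)$ requires $y < 5$, whence $t \leq 0$, and Lemma~\ref{4.2} bounds $y \geq -5 \cdot 6 = -30$, leaving a small finite list of candidates for $(x,y)$, further pruned by primitivity of the endpoint.

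For each admissible $(x,y)$, one then introduces $E_{3}$ (mutating away separating T-cones while preserving the $E_{1}$--$E_{2}$ adjacency) and shows no valid placement exists. If $E_{3}$ is vertical, convexity with $E_{1}$ places its top endpoint to the left of $(p,5)$, and a direct height computation then forces the height of $E_{3}$ to be strictly less than $5$, contradicting its $\frac{1}{5}(1,1)$ type. If $E_{3}$ is not vertical, convexity together with $E_{1}$ and the placed $E_{2}$ forces its far endpoint to have first coordinate even further left, and the height of $E_{3}$ then exceeds $5$, contradicting $m_{P} = 5$. The main obstacle is the enlarged case split compared to Lemma~\ref{5.1}: the asymmetric normalization of $E_{1}$ (no symmetric choice like $\{(-1,3),(1,3)\}$ is available) admits several candidates for $E_{2}$ across the two residue classes of $p$, so the bookkeeping is heavier, though each subcase ultimately resolves by the same convexity-versus-height dichotomy.
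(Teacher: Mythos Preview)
Your overall architecture matches the paper's: reduce to $m=3$, normalise $E_{1}$, mutate $E_{2}$ adjacent, then try to bring $E_{3}$ adjacent on the other side and derive a contradiction. Three points, however, separate your sketch from a complete argument and from what the paper actually does.

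First, the worry about ``two residue classes of $p$'' is a misconception. For the edge $\text{conv}\{(p,5),(p+1,5)\}$ to be of type $\tfrac{1}{5}(1,1)$ one needs $\tfrac{1}{5}\big((p,5)+(p+1,5)\big)\in N$, i.e.\ $5\mid 2p+1$, which forces $p\equiv 2\pmod 5$ uniquely. This is why the paper simply fixes $E_{1}=\text{conv}\{(-3,5),(-2,5)\}$ with no case split. (Relatedly, your appeal to Lemma~\ref{4.2} to bound $y\ge -30$ is not justified: that lemma applies to a \emph{special} facet, and there is no reason the chosen R-edge $E_{1}$ is special.)

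Second, and more seriously, you assert that one can mutate away any T-cones between $E_{1}$ and $E_{3}$ ``while preserving the $E_{1}$--$E_{2}$ adjacency'' without argument. This is precisely the subtlety the paper isolates and resolves: it bounds, using convexity, the Fano condition $\mathbf{0}\in P^{\circ}$, and the requirement that the mutation would actually disturb $E_{1}$--$E_{2}$, the region in which the second vertex of such a disrupting T-cone could lie, and then checks exhaustively that no primitive lattice point in that region yields a T-cone. Only after this does the paper conclude that all three R-cones may be taken adjacent.

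Third, the vertical/non-vertical dichotomy from Lemma~\ref{5.1} does not port as stated. Your claim that in the non-vertical case the far endpoint of $E_{3}$ must lie ``even further left'' (i.e.\ $c<-3$) is false: for instance $(c,d)=(8,-15)$ gives a genuine $\tfrac{1}{5}(1,1)$ edge from $(-3,5)$ (primitive direction $(11,-20)$, height $5$, and $\tfrac{1}{5}\big((-3,5)+(8,-15)\big)=(1,-2)\in N$) with $c>-3$. What actually eliminates such candidates is incompatibility with the finitely many admissible positions of $(a,b)$ for $E_{2}$ under convexity; this is exactly the paper's endgame, which lists the height-$5$ candidates for $(a,b)$ and $(c,d)$ and checks that no pair yields a convex configuration. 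So your final step should be replaced by that direct comparison rather than the height dichotomy borrowed from Lemma~\ref{5.1}.
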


\begin{proof}
Similarly to the proof of Lemma \ref{5.1}, we assume the existence of a Fano polygon $P$ with three $\frac{1}{5}(1,1)$ singularities, and perform a combination of $GL(N)$-translations and mutations so that one of the R-singularities is represented by the edge $E_{1} = \text{conv} \{(-3,5),(-2,5) \}$, and another by $E_{2} = \text{conv}\{(-2,5), (a,b)\}$, where $(a,b)\neq (-3,5)$. We show that we can always mutate $P$ so that the third R-singularity is represented by $E_{3}= \text{conv}\{(-3,5),(c,d)\}$, where $(c,d) \neq (-3,5)$, without disrupting the original two singularities sitting adjacently.

We study the possible T-cones that, when mutated with respect to, would separate the adjacent R-singularities. We calculate the line of points from (-2,5) that give an edge at height 5. Unlike Lemma \ref{5.1}, since we are only interested in $\frac{1}{5}(1,1)$ singularities, we can assume that the inner pointing normal $n = (b-5, -2-a)$ is primitive. This line provides a bound in which $(c,d)$ lies by convexity. Convexity also determines that $d \leq 5$. Furthermore since $P$ is Fano, the origin $(0,0)$ must lie in the interior so we further bound the region $(c,d)$ lies in. Finally since we are only interested in the case where the prospective T-singularity would disrupt the adjacent R-cones when mutated with respect to we obtain a final bound on the region in which $(c,d)$ can lie. It is then possible to exhaustively check that none of the primitive lattice points in this region give the second vertex of a T-cone.

Hence we assume that the three R-cones lie adjacently. Calculating the points $(c,d)$ so that $E_{3}$ is height 5 and comparing with the possible choices of $(a,b)$, we see that there are so choice of $(a,b)$ and $(c,d)$ that maintain convexity.

Therefore there can be no minimal Fano polygon with residual basket given by $\mathcal{B} = \{ 3 \times \frac{1}{5}(1,1) \}$ with $m_{P} = 5$.
\end{proof}

We know from Example \ref{2.14} that the anticanonical degree of the toric variety corresponding to a Fano polygon with only R-singularities of type $\frac{1}{5}(1,1)$ is given by $(-K_{X_{P}})^{2} = 12 -n - \frac{1}{5}m$. Therefore $n < 12$. We apply algorithm 2 finitely many times to complete the classification.

We give the table of results for the classification of polygons with singuarity content of the form $\Big(n, \{ m \times \frac{1}{5}(1,1) \} \Big)$ with $m > 0$. All the polytopes were found in case (i). None arose in case (ii).

\begin{center}
\def\arraystretch{1.25}
\begin{tabular}{| c | c | c | c | c |}
\hline
\# & Vertices of Polygon $P$ & $n$ & $m$ & $(-K_{X})^{2}$ \\
\hline
\hline
2.1 & (-3,5), (-2,5), (1,-2)  & 2 & 1 & $\frac{49}{5}$ \\
\hline
2.2 &(-3,5), (-2,5), (-1,3), (1,-2)  & 3 & 1 & $\frac{44}{5}$ \\
\hline
2.3 &(-3,5), (-2,5), (-1,3), (1,-2), (-2,3)  & 4 & 1 & $\frac{39}{5}$ \\
\hline
2.4 & (-3,5), (-2,5), (-1,3), (1,-2), (-1,1) & 5 & 1 & $\frac{34}{5}$ \\
\hline
2.5 &(-3,5), (-2,5), (0,1), (1,-2), (-1,1)  & 6 & 1 & $\frac{29}{5}$ \\
\hline
2.6 &(-3,5), (-2,5), (0,1), (1,-2), (0,-1)  & 7 & 1 & $\frac{24}{5}$ \\
\hline
2.7 & (-3,5), (-2,5), (1,-1), (0,-1) & 7 & 1 & $\frac{24}{5}$ \\
\hline
2.8 &(-3,5), (-2,5), (1,-1), (1,-2), (0,-1) & 8 & 1 & $\frac{19}{5}$ \\
\hline
2.9 &(-3,5), (-2,5), (1,-1), (1,-3) & 9 & 1 & $\frac{14}{5}$ \\
\hline
2.10 &(-3,5), (-2,5), (2,-3), (2,-5) & 10 & 1 & $\frac{9}{5}$ \\
\hline
2.11 &(-3,5), (-2,5), (4,-1), (-3,-1) & 11 & 1 & $\frac{4}{5}$ \\
\hline
2.12 &(-3,5), (-2,5), (3,-5), (2,-5) & 10 & 2 & $\frac{8}{5}$ \\
\hline
\end{tabular}
\end{center}

These polygons are illustrated in figure 2.

Similarly to section \ref{Section 5} we see that polygons 2.6 and 2.7 are not mutation equivalent by looking at the periods $\pi_{f}, \pi_{g}$ of their respective maximally mutable Laurent polynomials $f$ and $g$:
\[ \pi_{f}  = 1 + 12x^{2} + 6ax^{3} + 396x^{4} + (360a + 30b) x^{5} + \cdots, \] 
\[ \pi_{g} = 1 + (2c+12)x^{2} + (6c + 3d +90)x^{3} + (6c^{2}+24d +144c +636)x^{4} \] 
\[ + (20cd + 60c^{2} +390d +1260c +6900)x^{5} + \cdots . \]
It follows that polygons 2.6 and 2.7 are not mutation-equivalent.

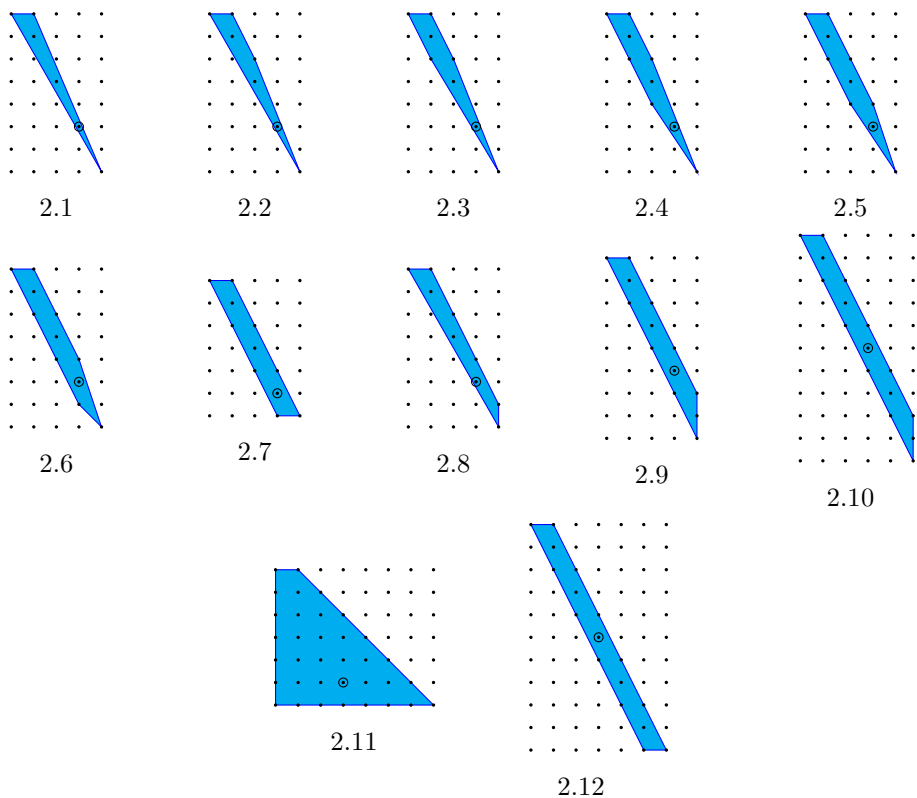
\begin{figure}
\centering
\begin{subfigure}{0.1\textwidth}
\centering
\begin{tikzpicture}[scale=0.3, transform shape]
\begin{scope}
\clip (-3.3,-2.3) rectangle (1.3cm,5.3cm); 
\filldraw[fill=cyan, draw=blue] (-3,5) -- (-2,5) -- (1,-2) -- (-3,5); 
\foreach \x in {-7,-6,...,7}{                           
    \foreach \y in {-7,-6,...,7}{                       
    \node[draw,shape = circle,inner sep=1pt,fill] at (\x,\y) {}; 
    }
}
 \node[draw,shape = circle,inner sep=4pt] at (0,0) {}; 
\end{scope}
\end{tikzpicture}
\caption*{2.1}
\end{subfigure}\hspace{1cm}
\begin{subfigure}{0.1\textwidth}
\centering
\begin{tikzpicture}[scale=0.3, transform shape]
\begin{scope}
\clip (-3.3,-2.3) rectangle (1.3cm,5.3cm); 
\filldraw[fill=cyan, draw=blue] (-3,5) -- (-2,5) -- (-1,3) -- (1,-2) -- (-3,5); 
\foreach \x in {-7,-6,...,7}{                           
    \foreach \y in {-7,-6,...,7}{                       
    \node[draw,shape = circle,inner sep=1pt,fill] at (\x,\y) {}; 
    }
}
 \node[draw,shape = circle,inner sep=4pt] at (0,0) {}; 
\end{scope}
\end{tikzpicture}
\caption*{2.2}
\end{subfigure}\hspace{1cm}
\begin{subfigure}{0.1\textwidth}
\centering
\begin{tikzpicture}[scale=0.3, transform shape]
\begin{scope}
\clip (-3.3,-2.3) rectangle (1.3cm,5.3cm); 
\filldraw[fill=cyan, draw=blue] (-3,5) -- (-2,5) -- (-1,3) -- (1,-2) -- (-2,3) -- (-3,5); 
\foreach \x in {-7,-6,...,7}{                           
    \foreach \y in {-7,-6,...,7}{                       
    \node[draw,shape = circle,inner sep=1pt,fill] at (\x,\y) {}; 
    }
}
 \node[draw,shape = circle,inner sep=4pt] at (0,0) {}; 
\end{scope}
\end{tikzpicture}
\caption*{2.3}
\end{subfigure}\hspace{1cm}
\begin{subfigure}{0.1\textwidth}
\centering
\begin{tikzpicture}[scale=0.3, transform shape]
\begin{scope}
\clip (-3.3,-2.3) rectangle (1.3cm,5.3cm); 
\filldraw[fill=cyan, draw=blue] (-3,5) -- (-2,5) -- (-1,3) -- (1,-2) -- (-1,1) -- (-3,5); 
\foreach \x in {-7,-6,...,7}{                           
    \foreach \y in {-7,-6,...,7}{                       
    \node[draw,shape = circle,inner sep=1pt,fill] at (\x,\y) {}; 
    }
}
 \node[draw,shape = circle,inner sep=4pt] at (0,0) {}; 
\end{scope}
\end{tikzpicture}
\caption*{2.4}
\end{subfigure}\hspace{1cm}
\begin{subfigure}{0.1\textwidth}
\centering
\begin{tikzpicture}[scale=0.3, transform shape]
\begin{scope}
\clip (-3.3,-2.3) rectangle (1.3cm,5.3cm); 
\filldraw[fill=cyan, draw=blue] (-3,5) -- (-2,5) -- (0,1) -- (1,-2) -- (-1,1) -- (-3,5); 
\foreach \x in {-7,-6,...,7}{                           
    \foreach \y in {-7,-6,...,7}{                       
    \node[draw,shape = circle,inner sep=1pt,fill] at (\x,\y) {}; 
    }
}
 \node[draw,shape = circle,inner sep=4pt] at (0,0) {}; 
\end{scope}
\end{tikzpicture}
\caption*{2.5}
\end{subfigure}\hspace{1cm}

\begin{subfigure}{0.1\textwidth}
\centering
\begin{tikzpicture}[scale=0.3, transform shape]
\begin{scope}
\clip (-3.3,-2.3) rectangle (1.3cm,5.3cm); 
\filldraw[fill=cyan, draw=blue] (-3,5) -- (-2,5) -- (0,1) -- (1,-2) -- (0,-1) -- (-3,5); 
\foreach \x in {-7,-6,...,7}{                           
    \foreach \y in {-7,-6,...,7}{                       
    \node[draw,shape = circle,inner sep=1pt,fill] at (\x,\y) {}; 
    }
}
 \node[draw,shape = circle,inner sep=4pt] at (0,0) {}; 
\end{scope}
\end{tikzpicture}
\caption*{2.6}
\end{subfigure}\hspace{1cm}
\begin{subfigure}{0.1\textwidth}
\centering
\begin{tikzpicture}[scale=0.3, transform shape]
\begin{scope}
\clip (-3.3,-1.3) rectangle (1.3cm,5.3cm); 
\filldraw[fill=cyan, draw=blue] (-3,5) -- (-2,5) -- (1,-1) -- (0,-1) -- (-3,5); 
\foreach \x in {-7,-6,...,7}{                           
    \foreach \y in {-7,-6,...,7}{                       
    \node[draw,shape = circle,inner sep=1pt,fill] at (\x,\y) {}; 
    }
}
 \node[draw,shape = circle,inner sep=4pt] at (0,0) {}; 
\end{scope}
\end{tikzpicture}
\caption*{2.7}
\end{subfigure}\hspace{1cm}
\begin{subfigure}{0.1\textwidth}
\centering
\begin{tikzpicture}[scale=0.3, transform shape]
\begin{scope}
\clip (-3.3,-2.3) rectangle (1.3cm,5.3cm); 
\filldraw[fill=cyan, draw=blue] (-3,5) -- (-2,5) -- (1,-1) -- (1,-2) -- (-3,5); 
\foreach \x in {-7,-6,...,7}{                           
    \foreach \y in {-7,-6,...,7}{                       
    \node[draw,shape = circle,inner sep=1pt,fill] at (\x,\y) {}; 
    }
}
 \node[draw,shape = circle,inner sep=4pt] at (0,0) {}; 
\end{scope}
\end{tikzpicture}
\caption*{2.8}
\end{subfigure}\hspace{1cm}
\begin{subfigure}{0.1\textwidth}
\centering
\begin{tikzpicture}[scale=0.3, transform shape]
\begin{scope}
\clip (-3.3,-3.3) rectangle (1.3cm,5.3cm); 
\filldraw[fill=cyan, draw=blue] (-3,5) -- (-2,5) -- (1,-1) -- (1,-3) -- (-3,5); 
\foreach \x in {-7,-6,...,7}{                           
    \foreach \y in {-7,-6,...,7}{                       
    \node[draw,shape = circle,inner sep=1pt,fill] at (\x,\y) {}; 
    }
}
 \node[draw,shape = circle,inner sep=4pt] at (0,0) {}; 
\end{scope}
\end{tikzpicture}
\caption*{2.9}
\end{subfigure}\hspace{1cm}
\begin{subfigure}{0.1\textwidth}
\centering
\begin{tikzpicture}[scale=0.3, transform shape]
\begin{scope}
\clip (-3.3,-5.3) rectangle (2.3cm,5.3cm); 
\filldraw[fill=cyan, draw=blue] (-3,5) -- (-2,5) -- (2,-3) -- (2,-5) -- (-3,5); 
\foreach \x in {-7,-6,...,7}{                           
    \foreach \y in {-7,-6,...,7}{                       
    \node[draw,shape = circle,inner sep=1pt,fill] at (\x,\y) {}; 
    }
}
 \node[draw,shape = circle,inner sep=4pt] at (0,0) {}; 
\end{scope}
\end{tikzpicture}
\caption*{2.10}
\end{subfigure}\hspace{1cm}

\begin{subfigure}{0.15\textwidth}
\centering
\begin{tikzpicture}[scale=0.3, transform shape]
\begin{scope}
\clip (-3.3,-1.3) rectangle (4.3cm,5.3cm); 
\filldraw[fill=cyan, draw=blue] (-3,5) -- (-2,5) -- (4,-1) -- (-3,-1) -- (-3,5); 
\foreach \x in {-7,-6,...,7}{                           
    \foreach \y in {-7,-6,...,7}{                       
    \node[draw,shape = circle,inner sep=1pt,fill] at (\x,\y) {}; 
    }
}
 \node[draw,shape = circle,inner sep=4pt] at (0,0) {}; 
\end{scope}
\end{tikzpicture}
\caption*{2.11}
\end{subfigure}\hspace{1cm}
\begin{subfigure}{0.1\textwidth}
\centering
\begin{tikzpicture}[scale=0.3, transform shape]
\begin{scope}
\clip (-3.3,-5.3) rectangle (3.3cm,5.3cm); 
\filldraw[fill=cyan, draw=blue] (-3,5) -- (-2,5) -- (3,-5) -- (2,-5) -- (-3,5); 
\foreach \x in {-7,-6,...,7}{                           
    \foreach \y in {-7,-6,...,7}{                       
    \node[draw,shape = circle,inner sep=1pt,fill] at (\x,\y) {}; 
    }
}
 \node[draw,shape = circle,inner sep=4pt] at (0,0) {}; 
\end{scope}
\end{tikzpicture}
\caption*{2.12}
\end{subfigure}\hspace{1cm}

\caption*{Figure 2: Minimal Representatives of Mutation-Equivalence Classes of Fano Polygons with Singularity Content $\Big(n, \{ m \times \frac{1}{5}(1,1) \} \Big)$ where $m > 0$.}
\end{figure}

\section*{Acknowledgements}

We thank Alexander Kasprzyk for his guidance and many useful conversations. Much of the paper was written in during a visit of EK to Nottingham supported by EPRSC Fellowship EP/NO22513/1.

School of Mathematical Sciences, University of Nottingham, Nottingham, NG7 2RD, UK. \textit{E-mail address:} danielcavey27@gmail.com

Mathematics Institute, University of Nottingham, Coventry, CV4 7AL, UK. \textit{E-mail address:} E.Kutas@warwick.ac.uk
\end{document}